\def\ps@pprintTitle{%
 \let\@oddhead\@empty
 \let\@evenhead\@empty
 \def\@oddfoot{}%
 \let\@evenfoot\@oddfoot}
\newcolumntype{C}[1]{>{\Centering}m{#1}}
\newtheorem{ex}{Example}[section]
\newtheorem{lemma}{Lemma}[section]
\newtheorem{thm}{Theorem}[section]
\newtheorem{rmk}{Remark}[section]
\newtheoremstyle{case}{}{}{}{}{}{:}{ }{}
\theoremstyle{case}
\newtheorem{case}{\textbf{Case}}
\numberwithin{equation}{section}
\begin{document}

\begin{frontmatter}
\title{Fractional Quadrature rule and using its Exactness for the M\"untz-Legendre Scaling Functions for Solving Fractional Differential Equations}
%\title{A new operational matrix of Riemann-Liouville integral for Legendre wavelet and Legendre wavelet collocation method for solving the stochastic fractional integro-differential equation}

\author{Ritu Kumari$^{a}$}

\author{Mani Mehra$^{a*}$}
\ead{mmehra@maths.iitd.ac.in}
\author{Abhishek Kumar Singh$^{a,b}$}
\address{$^{a}$Department of Mathematics, Indian Institute of Technology Delhi\\
	New Delhi-110016, India}
\address{$^{b}$Institute of Mathematics and Computer Science, Universit\"at Greifswald,
	Walther-Rathenau-Stra{\ss}e~47, 17489 Greifswald, Germany}

%% \fntext[label2]{}
\cortext[cor1]{Corresponding author}

 %%\address{Address\fnref{label3}}
%% \fntext[label3]{}

%% use optional labels to link authors explicitly to addresses:
%% \author[label1,label2]{<author name>}
%% \address[label1]{<address>}
%% \address[label2]{<address>}

\begin{abstract}
Fractional operators (derivatives/integrals) are defined via the integration of the functions. When the function is produced by a spanning set of fractional power functions, traditional quadrature rules often need to be revised, failing to provide exact evaluations for fractional power functions and thus introducing approximation errors. In this paper, we have formulated a fractional quadrature rule that achieves exact integration for functions within this specific set to address this issue. Some properties of the fractional quadrature rule have been proved, and the absolute error bound in the proposed fractional quadrature rule has been derived. The behavior of roots of the orthogonal M\"untz polynomial has also been observed for its application as nodes in the fractional quadrature rule. To illustrate the effectiveness of the newly proposed fractional quadrature rule, we focus on fractional differential equations that incorporate the left Caputo fractional derivative. In this context, M\"untz-Legendre scaling functions are utilized to approximate the Caputo derivative of functions involved in these equations. Additionally, we have derived an operational matrix for Riemann-Liouville integration to approximate the respective functions with the help of the fractional quadrature rule. To demonstrate the practical utility of our method, we provide illustrative examples that compare the $L_2$-error estimates in the solutions of fractional differential equations using our approach against those obtained with the Block-pulse method. These comparisons underscore the superior accuracy of our proposed method.
    
\end{abstract}
\begin{keyword}
M\"untz-Legendre polynomials, M\"untz-Legendre scaling functions, Fractional quadrature rule, Operational matrices, Riemann-Liouville integral, Fractional differential equations.

\end{keyword}
%\journal{Computational Methods in Applied Mathematics}
\end{frontmatter}
%\journal{Computational Methods in Applied Mathematics}

\section{\textbf{Introduction}}\label{8s1}
%Fractional calculus is an extension of classical calculus to the non-integer order. The mathematical model involving non-integer order derivatives/integrals can better represent physical phenomena than the integer-order mathematical model. Its usefulness is demonstrated by applications in control theory \cite{kumar2021collocation} and viscoelasticity \cite{MERAL2010939}, among other scientific and technical fields. ⁤Moreover, the significance of fractional calculus in specific fields such as fluid mechanics \cite{kulish2002application}, speech signal modelling \cite{assaleh2007modeling}, sound wave transmission in rigid porous media \cite{fellah2002application}, edge detection in image processing \cite{mathieu2003fractional}, and the study of ultrasonic wave propagation in human cancellous bone \cite{sebaa2006application} adds to its adaptability. ⁤⁤Additionally, it provides a unique mathematical model for diffusion equations on metric graphs \cite{mehandiratta2019existence}, exhibiting its broad applicability and potential in various research fields.
Fractional calculus is an extension of classical calculus to the non-integer order. The mathematical model involving non-integer order derivatives/integrals can better represent physical phenomena than the integer-order mathematical model. Its usefulness is demonstrated by applications in control theory \cite{kumar2021collocation} and viscoelasticity \cite{MERAL2010939}, among other scientific and technical fields. ⁤Moreover, the significance of fractional calculus in specific fields such as fluid mechanics \cite{kulish2002application}, speech signal modelling \cite{assaleh2007modeling}, sound wave transmission in rigid porous media \cite{fellah2002application}, edge detection in image processing \cite{mathieu2003fractional}, and the study of ultrasonic wave propagation in human cancellous bone \cite{sebaa2006application} adds to its adaptability. ⁤⁤Additionally, it provides a unique mathematical model for diffusion equations on metric graphs \cite{mehandiratta2019existence}, exhibiting its broad applicability and potential in various research fields.
⁤\par     
%The use of orthogonal polynomials for approximation is a key technique in mathematics, transforming complex problems into more manageable algebraic equations that are easier to solve through numerical methods. This method is particularly effective for solutions that are infinitely smooth, with the convergence of such approximations being extensively studied by various researchers \cite{canuto2007spectral1,canuto2007spectral2,guo2006optimal,gottlieb1977numerical,shen2011spectral}. However, approximating functions with discontinuities using continuous polynomial functions presents a challenge, as the resulting approximation is continuous and thus convergence cannot be assured. Recent advancements have been made in approximating such functions, employing techniques that can accurately represent the discontinuities in non-smooth solutions of differential equations \cite{schumack1990spectral,chen2018enriched,shen2016müntz,chen2016generalized}. Moreover, M\"untz polynomials, which are an alternative to classical integer-order polynomials, offer a more accurate approximation for fractional power functions and have been applied in addressing complex problems such as variable-order stochastic fractional integro-differential equations \cite{singh2023algorithm}. To mitigate these challenges, M\"untz-Legendre scaling functions, which extend the concept of Legendre scaling functions, have been utilized in various contexts including optimal control problems and fractional pantograph differential equations \cite{kumar2023muntz,rahimkhani2018muntz}.
Utilizing orthogonal polynomials for approximation is a fundamental technique in approximation theory, enabling the transformation of complex problems into more manageable algebraic equations. This method is particularly effective for solutions that exhibit sufficient smoothness, with extensive research dedicated to studying the convergence of such approximations by various researchers \cite{canuto2007spectral1,canuto2007spectral2,guo2006optimal,gottlieb1977numerical,shen2011spectral}.
%%%%%%%%%%%%%%%\todoThis Change this paragraph and recheack reference 26,7,29,8. like you wrot piecewise smooth function so write this way ... piesewise smooth function and wavelet is the one of the good candidate for this

However, researchers often encounter discontinuous functions in many situations, and using continuous polynomial functions to approximate such functions poses a challenge as the resulting approximation is continuous. Thus, convergence is very slow in this case. Numerous researchers have used piecewise smooth functions, which can represent the discontinuities in non-smooth solutions of differential equations accurately \cite{pedas2012piecewise,rahimkhani2018muntz}. In this work, M\"untz-Legendre scaling functions, which generalize the concept of Legendre scaling functions, have been utilized. Moreover, M\"untz polynomials, which are an alternative to classical integer-order polynomials, offer a more accurate approximation for fractional power functions and have been applied in addressing complex problems such as variable-order stochastic fractional integro-differential equations \cite{singh2023algorithm}.  \par 
\subsection{Motivation and Contribution of the Paper}
In this paper, we address the challenge of computing integrals that are not expressible in terms of elementary functions or not solvable by direct integration formulas, mainly when dealing with piecewise fractional polynomials with singular weight functions. To overcome this challenge, researchers often use two approaches; the first is using some other functions like Block-pulse \cite{kumar2023muntz} and Hat functions \cite{tripathi2013new} to evaluate the integration. While using these functions, an additional error is produced in the approximation. Therefore, the first approach always gives the additional approximation error unless the function belongs to the same class. The second approach approximates the integration using quadrature (or cubature) rules. While these rules provide exact results within the class of classical polynomials, they face limitations when applied to integrals involving fractional polynomials due to the singular kernel and support associated with such functions.\par 
In this work, we deploy the piecewise fractional polynomials to solve the fractional differential equations (FDEs) because the fractional derivative and integration of the fractional polynomials are again the fractional polynomials. However, due to the limit of the integration and singular kernel and support of the piecewise fractional polynomials, we cannot find the exact results for integration. In the case of fractional polynomials integrands, the classical quadrature formula \cite{shen2011spectral} does not give exact results. Table \ref{tab:litrev} lists the relevant works, and we can observe that these methods introduce an additional error in the approximation. Therefore, this paper introduces a novel fractional quadrature rule tailored specifically for fractional integrands, aiming to achieve precise results. By deploying this fractional quadrature rule, we offer a solution for solving fractional differential equations with improved accuracy. Our contribution lies in developing this fractional quadrature rule, which fills a crucial gap in existing methodologies for handling fractional integrands, enabling more precise solutions in the realm of fractional differential equations. We will discuss the contribution of the paper in the following paragraph.\par 
In this paper, we consider the following FDEs %%%%%%%%%%% what is f and t_f write explicitly
\begin{equation} \label{eq1prblm}
    \begin{aligned}
        {}_0^C D_t^{\alpha} y(t) &= f(t,y(t)), ~~~t\in (t_0,t_f],\\
        y(0) &= y_{0},
    \end{aligned}
\end{equation}
where ${}_0^C D_t^{\alpha}$ is the left Caputo fractional derivative of order $\alpha~ \in (0,1]$. The solution of the FDEs is approximated using the M\"untz-Legendre scaling functions.
% Fractional Differential Equations (FDEs) utilize fractional derivatives denoted as $\left(\frac{d^{\alpha}}{dx^{\alpha}}\right)$ for $\alpha>0$, extending the concept of ordinary differential equations to encompass non-integer orders. This extension is crucial for modeling intricate behaviors in various fields.
The main contributions of this paper are:

\begin{itemize}
\item This paper aids a novel fractional quadrature rule that exactly integrates M\"untz polynomials to a specific degree.
    \item 
 The derivation of the Riemann-Liouville integration operational matrix in previous research (as listed in Table \ref{tab:litrev}) has been found to introduce additional errors impacting the convergence rate towards the exact solution. Therefore, the proposed fractional quadrature rule is utilized to construct the operational matrix for M\"untz-Legendre scaling functions.
\item Exactness and error bound of the proposed fractional quadrature rule have been established. Moreover, the behaviour of nodes taken in the fractional quadrature rule have been investigated to get better approximations utilizing the proposed
rule.
\end{itemize}
\begin{table}[!hp]
    \centering
    \footnotesize
    {\begin{tabular}{|p{0.7cm}| p{3.2cm}| p{6.5cm}|}
         \hline
         \textbf{Year} &  \textbf{Author} &  \textbf{Results}\\ [1ex] \hline 
         2023 & Kumar and Mehra \cite{kumar2023muntz} & Proposed M\"untz-Legendre wavelet method utilizing Block-pulse functions to construct the operational matrix for fractional integration for solving Sturm-Liouville fractional optimal control problem.\\   \hline
2021 & Nemati and Torres \cite{articleliterature1} & Proposed a spectral method based on generalized and modified Hat functions for solving systems of fractional differential equations. \\ 
\hline
2021 & Maleknejad et al. \cite{maleknejad2021numerical} &Proposed a M\"untz-Legendre wavelet approach using Laplace transform for the numerical solution of distributed order fractional differential equations. \\ 
\hline
2017 & Rahimkhani et al. \cite{rahimkhani2018muntz} & Proposed M\"untz-Legendre wavelet method via piecewise fractional power functions to derive operational matrix 
of fractional-order integration 
for solving the fractional pantograph differential
equations. \\   
\hline

2015 & Bhrawy  et al. \cite{bhrawy2015review} & Reviewed the orthogonal polynomials  method to derive the operational matrix 
of fractional-order derivatives for approximation. \\ 
 \hline
    \end{tabular}}
     \caption{\textcolor{black}{Comprehensive literature review of some recent research in related domain.}}
     \label{tab:litrev}
\end{table}

\par
The organization of the paper is as follows: Section \ref{8s2} introduces foundational concepts in fractional calculus and the theory behind M\"untz-Legendre scaling functions, along with how these functions can be utilized for approximating other functions. Section \ref{section3} presents a newly developed fractional quadrature rule, detailing its characteristics, such as its precision with M\"untz polynomials to a specified degree and the benefits of selecting the orthogonal M\"untz polynomial roots as quadrature nodes. This section also validates the error limits for the introduced fractional quadrature rule. In Section \ref{section4}, the operational matrix for left Riemann-Liouville integration concerning M\"untz-Legendre scaling functions is derived, facilitated by the fractional quadrature rule and a transformation matrix that links the M\"untz-Legendre scaling functions basis to piecewise fractional power functions \cite{rahimkhani2018muntz}. Section \ref{section5} outlines the numerical method employed for solving FDEs, supplemented by test examples that compare the $L_2$ error outcomes of our method against those obtained through the Block-pulse technique. The paper concludes with Section \ref{section6}, summarizing the findings and contributions of the study.

\section{\textbf{Preliminaries}}\label{8s2}
In this section, some of the basic definitions of fractional derivatives and integrals are discussed. Subsequently, we explore constructing the M\"untz-Legendre scaling function by dilating and translating a set of M\"untz-Legendre polynomials. Further, the approximation of a square-integrable function using these scaling functions is explored. Now, we discuss some basic definitions of fractional calculus, laying the groundwork for the subsequent development of the paper in later sections.\par
For a given function $p(t),~t \in [a,b]$, the left Riemann-Liouville fractional integral of order $\alpha > 0$, denoted by $_{a}I_{t}^{\alpha}$, is defined as follows \cite{li2015numerical}
	\begin{equation}
		\begin{aligned}
			_{a}I_{t}^{\alpha}p(t) &= \frac{1}{\Gamma(\alpha)}\int_{a}^{t}(t-z)^{\alpha-1}p(z)dz, \\
			_{a}I_{t}^{0}p(t) &= p(t).
		\end{aligned}
	\end{equation}
The left Riemann-Liouville fractional derivative of order $\alpha > 0$, denoted by $_a^{RL}D_{t}^{\alpha}$, is defined as follows \cite{li2015numerical}
	\begin{equation}
		_a^{RL}D_{t}^{\alpha}p(t) = \frac{1}{\Gamma(m-\alpha)}\frac{d^{m}}{dt^{m}}\int_{a}^{t}(t-z)^{m-\alpha-1}p(z)dz,~m-1 \leq \alpha < m,
	\end{equation}
	where the function $p(t)$ has absolutely continuous derivative up to order $(m-1)$.
Further,  the left Caputo fractional derivative of order $\alpha > 0$, denoted by $_a^CD_{t}^{\alpha}$, is defined via the above Riemann-Liouville fractional derivative \cite{li2015numerical} as
	\begin{equation}
		_a^C D_{t}^{\alpha}p(t) =~ _a^{RL}D_{t}^{\alpha}\big[p(t)-\sum_{k=0}^{m-1}\frac{p^{(k)}(a)}{k!}(t-a)^{k}\big],~ m-1 < \alpha \leq m.
	\end{equation}
Moreover, when $m=1$, i.e., $\alpha=(0,1]$, the left Caputo fractional derivative of order $\alpha > 0$ is given by
\begin{equation}
		_a^C D_{t}^{\alpha}p(t) = \frac{1}{\Gamma(1-\alpha)}\int_a^t (t-z)^{-\alpha}p^{(1)}(z)dz.
	\end{equation}

\subsection{M\"untz-Legendre Wavelet}\label{2.2}
Applications of wavelets are found in various disciplines such as mathematics and engineering \cite{akansu2010emerging}. Many essential properties, such as compact support, orthogonality, and vanishing moments (the ability to exactly approximate polynomials up to a certain degree), are possessed by wavelets \cite{mehra2018wavelets}. Furthermore, in approximating non-smooth functions, they are found to be more efficient compared to orthogonal polynomials. Better convergence is achieved using wavelets even if the exact solution is not smooth, unlike in the case of orthogonal polynomials, where low-order algebraic convergence is obtained.
\par
The first generation of wavelets is achieved through the dilation and translation of a single function (or a set of functions) known as the mother wavelet \cite{mehra2018wavelets}. The introduction of M\"untz-Legendre scaling functions has been made, followed by a brief exposition on function approximation using these functions. Due to the property that allows for better accuracy in approximating functions consisting of fractional powers, M\"untz-Legendre scaling functions are considered superior to other wavelets.
The M\"untz-Legendre polynomials defined in the interval $[0,1]$ is given by
\begin{align}\label{muntzpoly}
L_m(t)=\sum_{i=0}^m c_{i,m} t^{\lambda_i},
\end{align}
where
$$c_{0,0}=1, \ \ c_{i,m}=\frac{\prod_{j=0}^{m-1}(1+\lambda_i+\lambda_j)}{\prod_{\substack{j=0\\j\neq i}}^m(\lambda_i-\lambda_j)}.$$
Here $\lambda_i \neq \lambda_j$ for $i \neq j$ and $\lambda_i > \frac{1}{2}, \forall i$. These polynomials are orthogonal with respect to the unit weight function and satisfy the following orthogonality condition:
$$\int_0^1 L_k(t) L_m(t)dt=\begin{cases} \frac{1}{2\lambda_m+1}, \ \ &\text{if} \ k=m ,\\
0, \ \ &\text{otherwise}.
\end{cases}$$
The M\"untz-Legendre scaling functions \cite{kumar2023muntz} in the interval $[0,1)$ are defined as
\begin{align}
    \phi_{n,m}^J(t)=\begin{cases}
        2^\frac{J-1}{2} \sqrt{2\lambda_m + 1} L_m(2^{J-1}t - n + 1), \ \ &\text{if} \ \frac{n-1}{2^{J-1}} \leq t < \frac{n}{2^{J-1}},\\
        0, &\text{otherwise},
    \end{cases}
\end{align}
where $n = 1,2,\ldots2^{J-1}$ is the translation parameter, $J=1,2,\ldots$ is the dilation parameter and $m=0,1,2,\ldots,M-1$ is the degree of the M\"untz-Legendre polynomials \cite{hosseinpour2018new}.
Throughout this paper, it has been assumed that $\lambda_i = i\lambda, ~i=0,1,2,\ldots,M-1$, where $\lambda$ is a fixed real constant.

\subsection{Function Approximation}
It is known that any function $p\in L_2([0,1))$ can be expanded with M\"untz-Legendre scaling functions $\phi_{n,m}^J(t)$ as

\begin{align*}
    p(t)= \sum_{n=1}^\infty \sum_{m=0}^{M-1}p_{n,m}^J\phi_{n,m}^J(t),
\end{align*}
where $p_{n,m}^J= \int_0^1 p(t)\phi_{n,m}^J(t)dt$ and $J\rightarrow \infty$. For numerical implementations, such $p_{n,m}^J$ are found, so that the truncated expansion with 
scaling functions $\phi_{n,m}^J(t)$, 
\begin{align}\label{span}
    p(t) \approx \sum_{n=1}^{2^{J-1}} \sum_{m=0}^{M-1}p_{n,m}^J\phi_{n,m}^J(t) = P^T\Phi(t),
\end{align}
is the best approximation of $p(t)$ in $L_2$ sense, where 
\begin{equation}
    \begin{aligned} 
    \Phi(t)=&[\phi_{1,0}^J(t),\phi_{1,1}^J(t),\ldots,\phi_{1,M-1}^J(t),\phi_{2,0}^J(t),\ldots,\phi_{2,M-1}^J(t),\ldots,\phi_{2^{J-1},0}^J(t),\\
    &\ldots,\phi_{2^{J-1},M-1}^J(t)]^T,\\
    =&[\mathcal{\phi}_{1}(t),\mathcal{\phi}_2(t),\ldots,\mathcal{\phi}_M(t),\mathcal{\phi}_{M+1}(t),\ldots,\mathcal{\phi}_{2^{J-1}M}(t)]^T,
\end{aligned}
\end{equation}
and 
\begin{align*}
  P=[p_{1,0}^J,p_{1,1}^J,\ldots,p_{1,M-1}^J,p_{2,0}^J,\ldots,p_{2,M-1}^J,\ldots,p_{2^{J-1},0}^J,
  \ldots,p_{2^{J-1},M-1}^J]^T.  
\end{align*}
Thus, all the functions henceforth will be approximated using Equation (\ref{span}) in this paper.

% \section{Formulation of the Problem}

The classical quadrature rule \cite{shen2011spectral} is used to approximate the integral of a function $p(t)$ with respect to some weight function $w(t)$ in an interval $[a,b]$. However, when $p(t)$ or $w(t)$ is a fractional power function, this quadrature rule does not yield good results. To address this issue, a new fractional quadrature rule is introduced, which will be utilized in further sections to approximate the integrals of fractional power functions with improved accuracy.
\section{Fractional Quadrature Rule}\label{section3}
Consider the definite integral 
\begin{align}
\int_a^b p(t) w(t) dt, \label{1}
\end{align}
where $p(t)$ is the function to be integrated with respect to the weight function $w(t)$ on the interval $(a,b)$.
A new fractional quadrature rule is proposed to approximate (\ref{1}), which is given by
\begin{align}\label{4.5}
 \int_a^b p(t) w(t) dt \approx \sum_{j=0}^N p(t_j)w_j,   
\end{align}
where
\begin{align}\label{weight}
    w_j=\int_a^b h_j(t) w(t) dt,
\end{align}
and
\begin{align}\label{hj}
    h_j(t)= \prod_{\substack{i=0\\i\neq j}}^N \frac{t^\lambda-t_i^\lambda}{t_j^\lambda-t_i^\lambda},
\end{align}
where  $\{t_j\}_{j = 0}^{N}$ are the quadrature nodes, $\{w_j\}_{j = 0}^{N}$ are the quadrature weights and $\lambda$ is the same parameter as used in the M\"untz-Legendre scaling functions in Section (\ref{2.2}). When $\lambda = 1$ in the Equation \eqref{hj}, then $h_{j}(t)$ becomes the Lagrange basis. Now, some properties of the proposed fractional quadrature rule will be proven.
\subsection{Properties of Fractional Quadrature Rule} \label{Properties of Fractional Quadrature Rule}
We can write 
\begin{align}\label{quderror}
\int_a^b p(t)w(t) dt = \sum_{j=0}^N p(t_j)w_j + R_N(p), 
\end{align}
where $R_N(p)$ is the quadrature error. It is said that the fractional quadrature rule is exact for M\"untz polynomials, $p(t)$, of degree less than or equal to $N$, if
$$\int_a^b p(t)w(t) dt = \sum_{j=0}^N p(t_j)w_j,$$
that is
$$R_N(p)=0.$$
 % \begin{lemma}
%     Let $\{p_n\}_{n=0}^\infty$ be a sequence of polynomials with $deg(p_n)=n$ defined in the open interval $I=(a,b)$. This sequence of polynomials is said to be orthogonal in $L_w^2(a,b)$ (w is the weight function) if
%     \begin{align*}
%         (p_n,p_m)_w=\int_a^bp_n(t)p_m(t)w(t)dx=c_n\delta_{mn},
%     \end{align*}
%     where $c_n=\norm{p_n}_w^2$ is a nonzero constant.\\
%     Then $p_{n+1}$ is orthogonal to any polynomial of degree less than or equal to n.
% \end{lemma}
% \begin{proof}
% See \cite{shen2011spectral}.
% \end{proof}
Before proving the exactness results, some important lemmas are discussed to establish the exactness of the fractional quadrature rule (\ref{4.5}). Initially, a significant lemma is stated, demonstrating the existence and uniqueness of orthogonal monic M\"untz polynomials under any weight function.
\begin{lemma}\label{l2.2}
    Let $w>0$ be a given weight in $L^1(a,b)$. Then for any $n \geq 1$, we can find a unique sequence of orthogonal  monic M\"untz polynomials $\{p_n^\lambda\}$ using the recurrence relation
    \begin{align*}
        p_0^\lambda&=1,\\
        p_1^\lambda&=x^\lambda-\alpha_0p_0^\lambda,\\
        p_{n+1}^\lambda&=(x^\lambda-\alpha_n)p_n^\lambda-\beta_np_{n-1}^\lambda,
    \end{align*}
    where
    \begin{align*}
        &\alpha_n=\frac{(x^\lambda p_n^\lambda,p_n^\lambda)_w}{\norm{p_n^\lambda}_w^2};  \ \ n\geq0,~~~
         \beta_n=\frac{\norm{p_n^\lambda}_w^2}{\norm{p_{n-1}^\lambda}_w^2}; \ \ n\geq1. 
    \end{align*}
\end{lemma}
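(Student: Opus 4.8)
The plan is to treat this as the M\"untz analogue of the classical three-term recurrence for orthogonal polynomials, exploiting the fact that under the substitution $s=t^\lambda$ a monic M\"untz polynomial $\sum_i c_i t^{i\lambda}$ of degree $n$ becomes a genuine monic polynomial $\sum_i c_i s^i$ in $s$. Writing $(f,g)_w=\int_a^b f(t)g(t)w(t)\,dt$ for the weighted inner product and $\norm{f}_w^2=(f,f)_w$, the positivity of $w$ guarantees that $(\cdot,\cdot)_w$ is a genuine inner product on the span of $\{1,t^\lambda,t^{2\lambda},\ldots\}$, so the whole development mirrors the classical theory. I would organize the argument into three parts: existence, uniqueness, and the identification of the recurrence coefficients.

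For existence I would apply Gram--Schmidt orthogonalization to the ordered system $\{t^{k\lambda}\}_{k\geq 0}$, setting
\begin{align*}
p_n^\lambda = t^{n\lambda} - \sum_{k=0}^{n-1}\frac{(t^{n\lambda},p_k^\lambda)_w}{\norm{p_k^\lambda}_w^2}\,p_k^\lambda.
\end{align*}
Since the exponents $k\lambda$ are distinct, the $t^{k\lambda}$ are linearly independent, and $w>0$ forces $\norm{p_k^\lambda}_w^2>0$, so every denominator is nonzero and each $p_n^\lambda$ is a well-defined monic M\"untz polynomial orthogonal to all lower-degree ones. For uniqueness, suppose $\tilde p_n^\lambda$ is another monic M\"untz polynomial of degree $n$ orthogonal to every M\"untz polynomial of degree $<n$. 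Then the leading $t^{n\lambda}$ terms of $p_n^\lambda-\tilde p_n^\lambda$ cancel, so it is a M\"untz polynomial of degree $<n$; being a difference of two elements each orthogonal to that subspace, it is orthogonal to itself, whence $\norm{p_n^\lambda-\tilde p_n^\lambda}_w=0$ and $p_n^\lambda=\tilde p_n^\lambda$.

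The recurrence is the heart of the matter. Since $t^\lambda p_n^\lambda$ is a monic M\"untz polynomial of degree $n+1$, I would expand it in the orthogonal basis as $t^\lambda p_n^\lambda = p_{n+1}^\lambda + \sum_{k=0}^{n} c_{n,k}p_k^\lambda$ with $c_{n,k}=(t^\lambda p_n^\lambda,p_k^\lambda)_w/\norm{p_k^\lambda}_w^2$. The key step is that multiplication by $t^\lambda$ is self-adjoint for $(\cdot,\cdot)_w$, so $(t^\lambda p_n^\lambda,p_k^\lambda)_w=(p_n^\lambda,t^\lambda p_k^\lambda)_w$; because $t^\lambda p_k^\lambda$ has degree $k+1$, this vanishes whenever $k+1<n$, killing all coefficients except $c_{n,n}$ and $c_{n,n-1}$. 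Identifying $c_{n,n}=\alpha_n$ is immediate from the definition, and for $c_{n,n-1}$ one writes $t^\lambda p_{n-1}^\lambda=p_n^\lambda+q$ with $q$ of degree $<n$, so that $(p_n^\lambda,t^\lambda p_{n-1}^\lambda)_w=\norm{p_n^\lambda}_w^2$ and hence $c_{n,n-1}=\beta_n$; rearranging yields the stated recurrence. I expect the main obstacle to be bookkeeping rather than depth: everything hinges on the degree-raising identity $t^\lambda\cdot t^{k\lambda}=t^{(k+1)\lambda}$, which closes the M\"untz system under multiplication by $t^\lambda$ only because of the assumption $\lambda_i=i\lambda$, and one must also ensure $t^\lambda$ is well-defined and single-valued on $(a,b)$ (e.g. $a\geq 0$) for the self-adjointness and substitution to be legitimate.
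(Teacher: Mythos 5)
Your proposal is correct and complete. Note that the paper does not actually prove this lemma — it simply cites an external reference (\cite{singh2024modified}) — so there is no in-paper argument to compare against; what you have written is the standard classical derivation (Gram--Schmidt existence, uniqueness by orthogonality of the difference, and the three-term recurrence from self-adjointness of multiplication by $t^\lambda$ together with its degree-raising property), transplanted to the M\"untz scale exactly as the cited source would have to do it. You correctly isolate the two points on which the transplant hinges: the equal spacing $\lambda_i=i\lambda$, which makes multiplication by $t^\lambda$ close the system and raise the M\"untz degree by exactly one, and the requirement $a\geq 0$ so that $t^\lambda$ is real-valued and the weighted inner product behaves as in the classical case. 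The identification $c_{n,n-1}=\norm{p_n^\lambda}_w^2/\norm{p_{n-1}^\lambda}_w^2=\beta_n$ via writing $t^\lambda p_{n-1}^\lambda=p_n^\lambda+q$ with $q$ of lower degree is exactly right, since both sides are monic of M\"untz degree $n$.
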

\begin{proof}
    See \cite{singh2024modified}.
\end{proof}
Next, an important lemma will be proved, which is crucial for establishing the exactness of the fractional quadrature rule (\ref{4.5}).
\begin{lemma}
Let $p(t)\in C^{N+1}[a,b]$, then for any set of quadrature nodes $\{t_j\}$ and weights $\{w_j\}, j=0,1,\ldots ,N$, where $w_j=h_j(t)$ as defined in (\ref{hj}),  the fractional interpolation formula is given by
\begin{align}\label{interpolation}
    p(t)\approx \sum_{j=0}^N p(t_j)w_j. 
\end{align}
If $e_N(t)$ is the interpolation error, that is,
$$p(t)=\sum_{j=0}^N p(t_j)w_j+e_N(t),$$
then $e_N(t)=0$ for all M\"untz polynomials, $p(t)$, of degree less than or equal to $N$.
\end{lemma}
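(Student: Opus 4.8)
The plan is to reduce the claim to the classical theory of Lagrange interpolation through the change of variable $s = t^{\lambda}$. Under the standing assumption $\lambda_i = i\lambda$, a Müntz polynomial of degree at most $N$ is precisely a function of the form $p(t) = \sum_{k=0}^{N} a_k t^{k\lambda}$, which becomes the ordinary algebraic polynomial $q(s) = \sum_{k=0}^{N} a_k s^{k}$ of degree at most $N$ once we set $s = t^{\lambda}$. The functions $h_j$ defined in \eqref{hj} are built entirely out of the quantities $t^{\lambda}$ and $t_i^{\lambda}$, so they should transform into the usual Lagrange cardinal polynomials, and the whole statement should follow from the uniqueness of the interpolating polynomial.

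Concretely, I would proceed in three steps. First, I would record the cardinal property $h_j(t_k) = \delta_{jk}$, which is immediate from the product in \eqref{hj}: evaluating at $t = t_k$ with $k \neq j$ makes one factor vanish, while at $t = t_j$ every factor equals $1$. Second, writing $s_i = t_i^{\lambda}$ and $s = t^{\lambda}$, I would observe that
$$h_j(t) = \prod_{\substack{i=0\\ i\neq j}}^{N} \frac{s - s_i}{s_j - s_i} = \ell_j(s),$$
the $j$-th classical Lagrange basis polynomial for the nodes $\{s_i\}_{i=0}^{N}$; in particular each $h_j$ is a polynomial of degree $N$ in $s$, hence itself a Müntz polynomial of degree at most $N$ in $t$. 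Third, I would substitute a general Müntz polynomial $p(t) = q(t^{\lambda})$ of degree at most $N$ and invoke the exactness of classical Lagrange interpolation, $q(s) = \sum_{j=0}^{N} q(s_j)\,\ell_j(s)$, valid for every algebraic polynomial $q$ of degree at most $N$. Translating back via $q(s_j) = p(t_j)$ and $\ell_j(s) = h_j(t)$ yields $p(t) = \sum_{j=0}^{N} p(t_j)\, h_j(t)$ identically, i.e. $e_N(t) = 0$.

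An equivalent finish that avoids quoting classical interpolation is a short linear-algebra argument: the space of Müntz polynomials of degree at most $N$ spanned by $\{1, t^{\lambda}, \ldots, t^{N\lambda}\}$ has dimension $N+1$; by the cardinal property the $N+1$ functions $h_0, \ldots, h_N$ are linearly independent and, by the second step, lie in this space, so they form a basis; since the interpolation operator $p \mapsto \sum_{j=0}^N p(t_j) h_j$ reproduces each basis element $h_k$ (again by cardinality), it must be the identity on the entire space.

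The only point requiring genuine care is the well-posedness of the change of variable: the denominators $t_j^{\lambda} - t_i^{\lambda}$ in \eqref{hj} must be nonzero and the interpolation must be uniquely solvable, which both hinge on $t \mapsto t^{\lambda}$ being injective on the working interval. This holds provided the nodes are nonnegative (so that $a \geq 0$) and $\lambda > 0$, under which distinct nodes $t_i$ produce distinct values $s_i = t_i^{\lambda}$. I would state this assumption explicitly; granting it, the reduction above is exact and the lemma follows.
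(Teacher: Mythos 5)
Your proof is correct, but it takes a genuinely different route from the paper. The paper proves the lemma by deriving the full Lagrange-type remainder: after the substitution $y=t^{\lambda}$ it constructs an auxiliary function $k(x,y)=P(x)-P_N(x)-(x-t_0^{\lambda})\cdots(x-t_N^{\lambda})g(y)$, applies Rolle's theorem $N+1$ times to produce a point $\zeta$ with $k^{(N+1)}(\zeta)=0$, and thereby obtains the error representation $E_N(y)=\frac{1}{(N+1)!}(y-t_0^{\lambda})\cdots(y-t_N^{\lambda})\,P^{(N+1)}(\zeta)$; the conclusion then follows because $P^{(N+1)}\equiv 0$ whenever $P$ is an algebraic polynomial of degree at most $N$. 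You instead bypass the remainder formula entirely: you use the cardinal property $h_j(t_k)=\delta_{jk}$, identify $h_j(t)=\ell_j(t^{\lambda})$ with the classical Lagrange basis at the transformed nodes $s_i=t_i^{\lambda}$, and invoke the exactness (equivalently, uniqueness) of classical Lagrange interpolation on polynomials of degree at most $N$. Your argument is shorter and purely algebraic; in particular it does not need the hypothesis $p\in C^{N+1}[a,b]$ for the exactness claim, and it makes explicit the well-posedness condition (distinct nonnegative nodes and $\lambda>0$ so that $t\mapsto t^{\lambda}$ is injective and the denominators in \eqref{hj} are nonzero) that the paper leaves implicit. What the paper's longer route buys is the explicit derivative-form error representation $E_N(y)$, which in principle supports quantitative estimates for non-polynomial integrands, though the paper's subsequent error analysis actually proceeds via Chebyshev expansions rather than this formula. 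Both arguments establish the stated conclusion.
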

\begin{proof}
    We have
    $$p(t)=\sum_{j=0}^N p(t_j)h_j(t)+e_N(t).$$
    Assuming $y=t^\lambda$,  $p(t)=P(t^{\lambda})$, $h_j(t)=H_j(t^{\lambda})$ and $e_N(t)=E_N(t^{\lambda})$, then
    $$P(y)=\sum_{j=0}^N p(t_j)H_j(y)+E_N(y),$$
    where 
    $$H_j(y)=\prod_{\substack{i=0\\i\neq j}}^N \frac{y-t_i^\lambda}{t_j^\lambda-t_i^\lambda}.$$
    Also, the error function $E_N(y)$ can be written as
    \begin{align}\label{4.9}
        E_N(y)=(y-t_0^\lambda)(y-t_1^\lambda)\ldots(y-t_N^\lambda)g(y).
    \end{align}
    Thus, the function $P(y)$ using Equation (\ref{4.9}) is rewritten as 
    \begin{align}\label{eqn}
         P(y)=\sum_{j=0}^N p(t_j)H_j(y)+(y-t_0^\lambda)(y-t_1^\lambda)\ldots(y-t_N^\lambda)g(y).
    \end{align}
    Constructing a function $k(x,y)$, 
    \begin{align}
        k(x,y)=P(x)-P_N(x)-(x-t_0^\lambda)(x-t_1^\lambda)\ldots(x-t_N^\lambda)g(y),
    \end{align}
    where 
    $$P_N(x)=\sum_{j=0}^N p(t_j)H_j(x).$$
    Since the function $k(x,y)$ has $N+2$ zeros (when considered as a function of $x$), viz., $t_0^\lambda, t_1^\lambda, \ldots, t_N^\lambda, y$,  and satisfy all the assumptions for applying Rolle's Theorem,  therefore there exist $N+1$ points in $[a,b]$, say $\{\zeta_r\}_{r=0}^N$ such that 
    $$k'(\zeta_r)=0.$$
   Similarly, by applying the theorem recursively, a point $\zeta \in [a,b]$ is found such that
    \begin{align*}
        k^{N+1}(\zeta)=0.
        \end{align*}
        Using this fact in (\ref{eqn}),we get
            \begin{align*}            
             P^{N+1}(\zeta)=(N+1)!g(y), \end{align*}
             where the derivative is taken with respect to $x$.\\
             As a result of the above calculations, the function $g(y)$ in the error function $E_N(y)$ can be expressed as
      \begin{align*} &g(y)=\frac{1}{(N+1)!} P^{N+1}(\zeta).
        \end{align*}
 Using the value of $g(y)$ in (\ref{4.9}), we get
 \begin{align}
E_N(y)=\frac{1}{(N+1)!}(y-t_0^\lambda)(y-t_1^\lambda)\ldots(y-t_N^\lambda)   P^{N+1}(\zeta).  
 \end{align}
 Now, any M\"untz polynomial of degree $n$ is of the form
 \begin{align}
     L_n(t)=a_nt^{n\lambda}+a_{n-1}t^{(n-1)\lambda}+\ldots+a_1t^\lambda+a_0.
 \end{align}
 If $p(t)=L_n(t)$=$P(t^\lambda)$, then for all $n\leq N$;
 $$P^{n+1}(\zeta)=0,$$
 and consequently
 $$E_N(t^\lambda)=e_N(t)=0.$$
Hence, it is proved that 
\begin{align} \label{4.14}
    p(t) = \sum_{j=0}^N p(t_j)w_j,  
\end{align} that is, the fractional interpolation formula (\ref{interpolation}) coincides with the function $p(t)$, if $p(t)$ is a M\"untz polynomial of degree less than or equal to $N$.
\end{proof}
In view of the above lemmas, the exactness of the fractional quadrature rule (\ref{4.5}) will now be proven.
\begin{thm}\label{exactness1}
 If $p(t)\in C^{N+1}[a,b]$, then for any set of quadrature nodes $\{t_j\}$ and weights $\{w_j\}$ as defined in (\ref{weight}) , $j=0,1,\ldots ,N$, the fractional quadrature rule 
(\ref{4.5})
is exact for M\"untz polynomials of degree less than or equal to $N$.   
\end{thm}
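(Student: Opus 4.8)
The plan is to treat this theorem as an immediate consequence of the interpolation lemma just established: once we know that the fractional interpolation formula reproduces Müntz polynomials of degree at most $N$ exactly, the corresponding interpolatory quadrature inherits the same exactness by integrating the interpolant against the weight $w$.

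First I would fix an arbitrary Müntz polynomial $p$ of degree $\le N$ and recall from the preceding lemma (Equation (\ref{4.14})) that the interpolation error $e_N(t)$ vanishes identically, so that
\begin{align*}
    p(t)=\sum_{j=0}^N p(t_j)\,h_j(t)
\end{align*}
holds pointwise on $[a,b]$, with $h_j$ as in (\ref{hj}). Next I would multiply this identity by the weight $w(t)$ and integrate over $(a,b)$. Since the sum is finite and each $p(t_j)$ is a constant, linearity of the integral lets me interchange summation and integration to obtain
\begin{align*}
    \int_a^b p(t)\,w(t)\,dt=\sum_{j=0}^N p(t_j)\int_a^b h_j(t)\,w(t)\,dt.
\end{align*}
Finally, recognizing the inner integrals as the quadrature weights defined in (\ref{weight}), namely $w_j=\int_a^b h_j(t)\,w(t)\,dt$, yields
\begin{align*}
    \int_a^b p(t)\,w(t)\,dt=\sum_{j=0}^N p(t_j)\,w_j,
\end{align*}
so that $R_N(p)=0$ and the rule (\ref{4.5}) is exact, as claimed.

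There is no genuine obstacle here: all of the analytic work was carried out in the interpolation lemma (the Rolle's theorem argument controlling $e_N$), and the present statement merely repackages that result. The only points worth checking are bookkeeping ones, that the weights $w_j$ are well defined, which follows because $w\in L^1(a,b)$ (as assumed in Lemma \ref{l2.2}) and each $h_j$ is bounded on $[a,b]$ once $a\ge 0$ so that the fractional powers $t^\lambda$ make sense, and that the interchange of the finite sum with the integral is legitimate, which is immediate by linearity. I would also note that exactness holds for \emph{any} choice of distinct nodes $\{t_j\}$; the special role of the roots of the orthogonal Müntz polynomials from Lemma \ref{l2.2} is to raise the degree of exactness beyond $N$, a separate Gaussian-type consideration not needed for this theorem.
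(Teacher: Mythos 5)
Your proposal is correct and follows essentially the same route as the paper's own proof: invoke the interpolation identity (\ref{4.14}) for M\"untz polynomials of degree at most $N$, multiply by $w(t)$, integrate, and identify the resulting integrals with the weights $w_j$ from (\ref{weight}). The extra remarks on well-definedness of the weights and on the nodes being arbitrary are sound but not part of the paper's argument.
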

\begin{proof}
Using expression (\ref{4.14}),we have
\begin{align*}
p(t) = \sum_{j=0}^N p(t_j)h_j(t),    
\end{align*}
where $p(t)$ is a M\"untz polynomial of degree less than or equal $N$.\\
Integrating both sides with respect to weight $w(t)$, we get
\begin{align*}
    \int_a^b p(t)w(t) dt &= \sum_{j=0}^N\int_a^b p(t_j)h_j(t)w(t) dt\\
    &=\sum_{j=0}^Np(t_j)\int_a^b h_j(t)w(t) dt\\
    &=\sum_{j=0}^Np(t_j)w_j. 
\end{align*}
\end{proof}
Using Theorem \ref{exactness1}, a stronger exactness result of the proposed fractional quadrature rule (\ref{4.5}) can be proved for M\"untz polynomials of degree up to $2N+1$.
\begin{thm}\label{th 4.1}
    Let $\{t_j\}_{j=0}^N$ be the zeros of $p_{N+1}^\lambda$, where $p_{N+1}^\lambda$ is the orthogonal M\"untz polynomial of degree $N+1$ from the orthogonal sequence of M\"untz polynomials with respect to the weight function $w(t)$ as obtained in Lemma (\ref{l2.2}). Then, the fractional quadrature rule (\ref{4.5}) is exact for M\"untz polynomials upto degree $2N+1$, i.e.,
    \begin{align}\label{4.16}
     \int_a^b p(t)w(t) dt = \sum_{j=0}^N p(t_j)w_j,    
    \end{align}
    where $p(t)$ is a M\"untz polynomial of degree less than or equal to $2N+1$.
\end{thm}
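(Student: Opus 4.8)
The plan is to mimic the classical Gauss-quadrature exactness argument, transported into the Müntz setting by the same substitution $y=t^{\lambda}$ used in the preceding lemma. First I would write $p(t)=P(t^{\lambda})$, where $P$ is an ordinary algebraic polynomial of degree at most $2N+1$, and observe that under this substitution the orthogonal Müntz polynomial satisfies $p_{N+1}^{\lambda}(t)=q_{N+1}(t^{\lambda})$ for an ordinary polynomial $q_{N+1}$ of degree exactly $N+1$ whose roots are precisely $y_j:=t_j^{\lambda}$, $j=0,\dots,N$.

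The central step is Euclidean division of ordinary polynomials: since $\deg q_{N+1}=N+1$, I write $P(y)=Q(y)\,q_{N+1}(y)+r(y)$ with $\deg Q\le N$ and $\deg r\le N$. Translating back, $p(t)=Q(t^{\lambda})\,p_{N+1}^{\lambda}(t)+r(t^{\lambda})$, where both $Q(t^{\lambda})$ and $r(t^{\lambda})$ are Müntz polynomials of degree at most $N$. Integrating against $w$ and using that $p_{N+1}^{\lambda}$ is orthogonal to every Müntz polynomial of degree $\le N$ (by its construction in Lemma \ref{l2.2}), the cross term $\int_a^b Q(t^{\lambda})\,p_{N+1}^{\lambda}(t)\,w(t)\,dt$ vanishes, so that $\int_a^b p(t)w(t)\,dt=\int_a^b r(t^{\lambda})w(t)\,dt$. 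Because $r(t^{\lambda})$ has degree $\le N$, Theorem \ref{exactness1} applies and yields $\int_a^b r(t^{\lambda})w(t)\,dt=\sum_{j=0}^N r(t_j^{\lambda})w_j$.

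Finally I would evaluate the quadrature sum directly: $\sum_{j=0}^N p(t_j)w_j=\sum_{j=0}^N\bigl[Q(t_j^{\lambda})\,p_{N+1}^{\lambda}(t_j)+r(t_j^{\lambda})\bigr]w_j$, and since each $t_j$ is a zero of $p_{N+1}^{\lambda}$, the first summand drops out, leaving $\sum_{j=0}^N r(t_j^{\lambda})w_j$. Comparing this with the integral computed above closes the proof.

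I expect the main obstacle to be twofold, and both parts sit behind the visible computation. The first is bookkeeping of degrees: I must verify that $y=t^{\lambda}$ sends a Müntz polynomial of degree $n$ bijectively to an ordinary polynomial of degree $n$ in $y$, so that the division algorithm and the orthogonality degree counts line up; this is exactly where the standing hypothesis $\lambda_i=i\lambda$ is essential. The second and more delicate point is legitimacy of the nodes: the $t_j$ must be real, distinct, and lie in $(a,b)$ for the weights $w_j$ and the basis functions $h_j$ to be well defined. This follows because the substitution turns the inner product $(\cdot,\cdot)_w$ into a genuine positive-weight inner product in the variable $y$ on $(a^{\lambda},b^{\lambda})$, whose orthogonal polynomials have simple real zeros inside the interval---precisely the root behavior discussed elsewhere in the paper. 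I would state this reality and simplicity of the roots as the key supporting fact before running the division argument.
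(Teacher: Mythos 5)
Your proposal is correct and follows essentially the same route as the paper's own proof: the substitution $y=t^{\lambda}$, Euclidean division by $p_{N+1}^{\lambda}$, orthogonality to kill the quotient term, and Theorem \ref{exactness1} applied to the remainder. The only difference is that you explicitly flag the reality, simplicity, and location of the nodes $t_j$ as a point needing justification, which the paper asserts without proof.
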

\begin{proof} 
The set of M\"untz polynomials upto degree n can be represented as
$$M_n(a,b)=span\{t^{\lambda_0},t^{\lambda_1},\ldots,t^{\lambda_n}\}.$$
Firstly, the existence of $p_{N+1}^\lambda$ is ensured by the Lemma (\ref{l2.2}).
Again, treating the M\"untz polynomial as classical polynomial by assuming $y=t^\lambda$, we get $N+1$ roots of $p_{N+1}^\lambda(t)=0$ in $[a,b] $, say $t_0, t_1, \ldots, t_N$.
Moreover, by virtue of Division Algorithm on $\mathbb R[y]$ (set of all real polynomials over $\mathbb R$), for any $p(t) \in M_{2N+1}(a,b) $, there exists $q(t),r(t) \in M_N(a,b)$ such that
\begin{align}\label{division alg}
p(t)=q(t)p_{N+1}^\lambda(t)+r(t).
\end{align}
Since $p_{N+1}^\lambda(t_j)=0$ for $j=0,1,\ldots,N$, so
\begin{align*}
    p(t_j)=r(t_j).
\end{align*}
Since the degree of $q(t)$ is less than or equal to $N$, using the orthogonality of $p_{N+1}^\lambda$,
$$\int_a^b q(t) p_{N+1}^\lambda(t)w(t)=0.$$
Therefore, multiplying by the weight function $w(t)$ in Equation (\ref{division alg}) and integrating, we obtain
\begin{align*}
\int_a^b p(t)w(t) dt &=  \int_a^b r(t)w(t) dt \\
&= \sum_{j=0}^N r(t_j)w_j\\
&= \sum_{j=0}^N p(t_j)w_j,  \ \ \ \ \ \forall p\in M_{2n+1}(a,b).
\end{align*}
\end{proof}
The exactness of the fractional quadrature rule (\ref{4.5}) up to degree $2N+1$ has been proved, and now it will be verified by two test examples in the following example.
\begin{ex}
   Theorem (\ref{th 4.1}) is verified for M\"untz-Legendre Polynomial of degree $5$ and $7$, denoted by $L_5(t)$ and $L_7(t)$ respectively, as defined in (\ref{muntzpoly}).
\end{ex}
In this example, $\lambda=0.75$ is used. The integrals of $p(t)=L_5(t)$ and $p(t)=L_7(t)$ with respect to the weight function $w(t)=(1-t)$ are exactly approximated as in expression (\ref{4.16}) for $N=2$ and $3$, respectively. The computed error in approximation is of the order of $10^{-33}$ and $10^{-34}$, respectively, which can be considered as machine error. \par
Now, another important property of the fractional quadrature rule (\ref{4.5}) related to the node points $t_j$'s, where $t_j$'s are the roots of the orthogonal M\"untz polynomials as obtained in Lemma \ref{l2.2}, will be discussed. %%% You can add this one
%We investigate the behaviour of roots of the orthogonal M\"untz polynomials and its advantages when taken as the nodes in the quadrature rule to approximate functions.
More precisely, the fractional quadrature rule (\ref{4.5}), with nodes being the roots of orthogonal M\"untz polynomials, $p_n^{\lambda}$ (orthogonal with respect to a weight function $w'(t)=(c-t)^p g(t),~g(c)\neq 0,~p \in \mathbb R^+$, where c is the point where $w(t)$ in the integral (\ref{4.5}) approaches zero), will be analyzed.
%gives a better approximation for functions with fractional power $\lambda$, that is, the quadrature error defined in (\ref{quderror}), $R_N(p)$, is directly proportional to $\lambda(\in (0,1])$ since the distance of the nodes from c increases with decreasing $\lambda$.
For instance, consider a function $p(t)$ which is to be integrated with respect to a weight function $w(t)$ in $[a,b]$ using the fractional quadrature rule (\ref{4.5}) and suppose $w(t)$ approaches zero at a point $c$ in $[a,b]$. Then, obviously, for better approximation, the distance of the nodes $t_j's$ in (\ref{4.5}) from the point $c$ should be larger, i.e., the approximation error $\propto$ $\frac{1}{\norm{ t-c}}_2$, where $t=(t_0,t_1,t_2,\ldots t_N)$. Some computational experiments were conducted, and it was found that $\mid{ t_j-c}\mid$ increases as the magnitude of $\lambda \ (\in (0,1])$ in $p_n^\lambda$ 
is decreased, i.e.,
%orthogonal polynomials $p_n^{\lambda}$ with respect to a weight function $w'(t)$ that also has $c$ as its zero, the distance (say $d_i$) of  roots of $p_n^{\lambda}=0$ (say $r_i's)$ from $c$ are greater as we decrease the magnitude of $\lambda \ (\in (0,1])$, i.e.,

     \begin{align}
       \label{relatn}  
    \mid{ t_j-c}\mid\propto\frac{1}{\lambda}.
     \end{align}
    To validate these assertions, the roots of $p_n^\lambda$ and the errors in approximation for various values of $\lambda$ have been graphically represented in the subsequent figures.
   \begin{figure}[ht]
\centering
\begin{minipage}{.5\textwidth}
  \centering
  \includegraphics[width=0.96\linewidth]{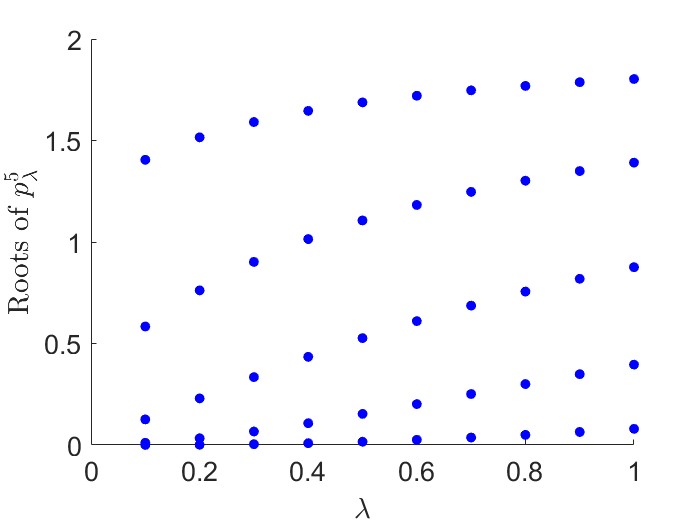}
  
  \label{fig:test1}
\end{minipage}%
\begin{minipage}{.5\textwidth}
  \centering
  \includegraphics[width=0.96\linewidth]{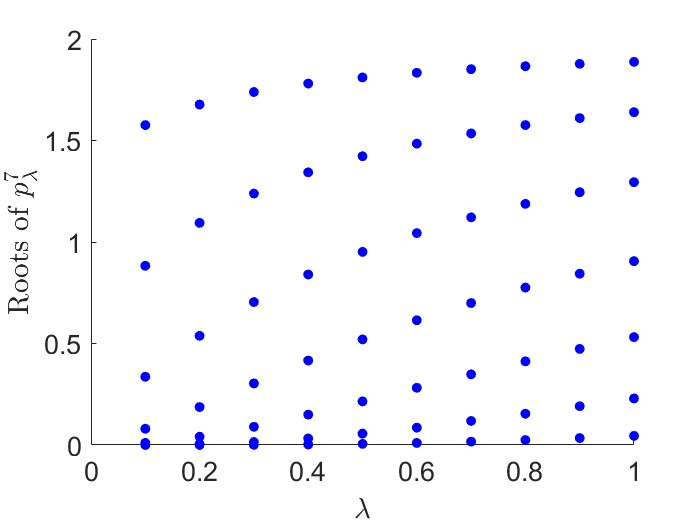}
 
  \label{fig:test2}
\end{minipage}
\caption{Distribution of roots of  orthogonal M\"untz polynomials of degree $5$ and $7$ for different values of $\lambda.$}
\label{figure 1}
\end{figure}\par
Figure \ref{figure 1} illustrates the distribution of zeroes of the orthogonal M\"untz polynomials of degree $5$ and $7$ with respect to the weight function $w'(t)=(2-t)$. It clearly shows how decreasing $\lambda$ shifts the zeroes of the orthogonal M\"untz polynomials far away from the root of the weight function $w'(t)$, i.e., $2$. Following this, Figure \ref{L2 norm} displays the $L_2$ norm of the distance of zeroes of $p_n^\lambda$ from the zero of the weight function $w'(t)=(2-t)^p~ (p=1,2,3)$, which confirms the relation (\ref{relatn}). It is also observed that increasing $p$ in the weight function causes the graph to become nearly linear, and the $L_2$ norm to increase, thereby making the zeroes more suitable to serve as nodes for improved approximation. This effect arises because, in the vicinity of $2$, the value of the weight function $(2-t)^p$ diminishes as $p$ increases. 
\begin{figure}[ht]
    \centering
    \includegraphics[width=0.7\linewidth]{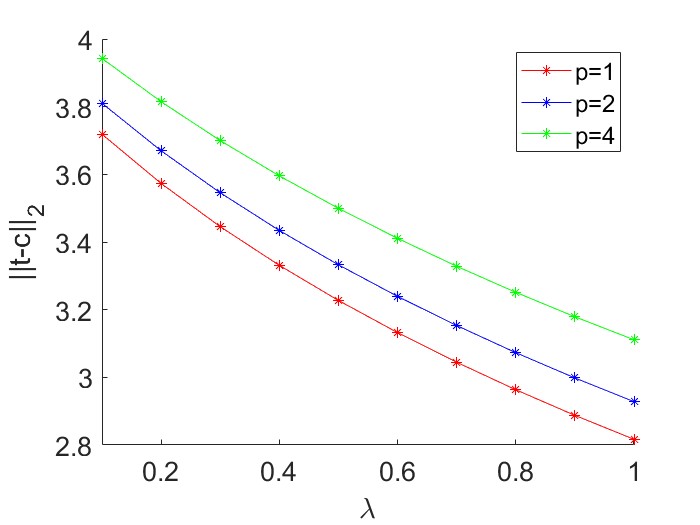}
    \caption{$L_2$ norm of  distance of the roots of orthogonal M\"untz  polynomial of degree $5$ from the zero of the weight function $(2-t)^p$. }
    \label{L2 norm}
\end{figure}

An example is provided to demonstrate the benefit of using the roots of the orthogonal M\"untz polynomial $p_n^{\lambda}$ with respect to $w'(t)=(c-t)$ for approximating the integral of a function that includes terms raised to a fractional power $\lambda$. Consider the function $p_{\lambda}(t)=\sin(3t^\lambda)$ to be integrated with respect to the weight function $w(t)=e^{-2t}$ over the interval [0,2]. Given that $w(t)$ approaches zero as $t\rightarrow2$, $p_{N+1}^{\lambda}$ is determined with respect to the weight function $w'(t)=(2-t)$ for various values of $\lambda$. The integral is then approximated using the fractional quadrature rule (\ref{4.5}) as
$$\int_0^2 e^{-2t}sin(3t^{\lambda}) \approx  \sum_{j=0}^Nsin(3t_j^{\lambda})w_j,  $$ 
where $ w_j=\int_0^2 h_j(t) e^{-2t} dt$ and $h_j(t)$ as defined in (\ref{hj}). In this case, $t_j$'s are the zeroes of $p_{N+1}^{\lambda}$.
  
 \begin{figure}[htbp]
     \centering
     \includegraphics[width=0.7\linewidth]{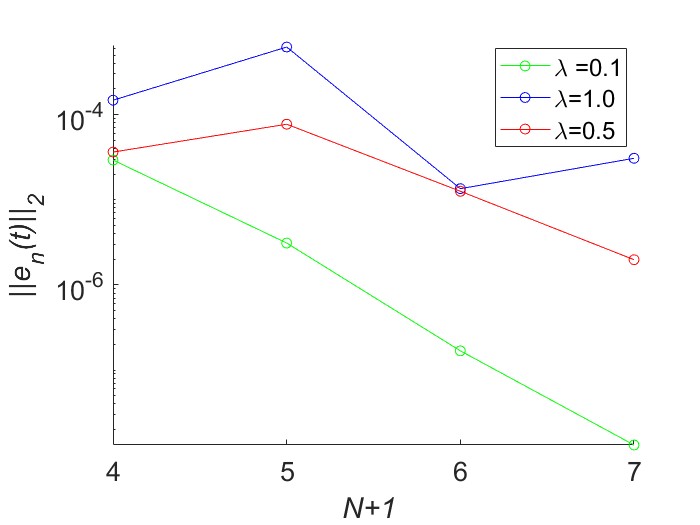}
     \caption{$L_2$ error in the approximation of the function $p_\lambda(t)=e^{-2x}sin(3t^{\lambda})$ with orthogonal M\"untz polynomial of degree $N+1$, using the fractional quadrature rule (\ref{4.5}) for different values of $\lambda$.
     }
     \label{fError in Approximation}
 \end{figure}
 Figure \ref{fError in Approximation} depicts well that when $\lambda$ is decreased, the approximation error also decreases significantly. 
 Hence, by considering these nodes in the fractional quadrature rule (\ref{4.5}), a more accurate approximation of the integral is achieved. It is concluded that the approximation of functions using this fractional quadrature rule proves to be significantly more advantageous in the case of functions composed of fractional powers, and also that the accuracy of the approximation is enhanced when the roots of the orthogonal M\"untz polynomials are employed as the nodes in the fractional quadrature rule. Consequently, the quadrature error, $R_N(p)$, in the fractional quadrature rule (\ref{4.5}) is reduced, making the rule (\ref{4.5}) more effective when the function $p_\lambda(t)$ is characterized by fractional power terms.
 
Now that the exactness of the fractional quadrature rule has been discussed, the bound of the quadrature error $R_N(p)$ will be determined in the following section.

\subsection{Error Analysis}
In any quadrature rule, there is an associated error in the approximation, which depends on the properties of the function to be approximated and the method itself. This section finds the bound of absolute error \cite{wu2023gaussian} that occurs when a function is approximated using the proposed rule (\ref{4.5}). For the same,  an expression is derived for the error bound in the following theorem.
\begin{thm}
    Let $R_N(p)$ be the remainder of the fractional quadrature rule defined in (\ref{4.5}) and let 
    $$k_w = \int_a^b w(t) dt,$$
    where $-1 \leq a, b \leq 1$.
    \begin{enumerate}[label=(\alph*)]
    \item If we consider an ellipse with foci $\pm 1$ such that the lengths of the semi-major and semi-minor axis sums to $r>1$ and $p$ is analytic in the region bounded by this ellipse. Moreover, if $\mid p(t) \mid \leq M_r$, then for each $N \geq 0$
    $$ \mid R_N(p) \mid \leq \frac{4k_w M_r}{r^{2N+2}(1-r^{-1})}.$$
    \item If $p, p^{(1)},\ldots, p^{(s-1)}$ are absolutely continuous in the closed interval $[-1,1]$ and $p^{(s)}$ is of bounded variation $V_s$ for some $s\in \mathbb N$. Then, for each $N\geq \lfloor \frac{s}{2}\rfloor $ 
    $$\mid R_N(p) \mid \leq \frac{4k_wV_s}{\pi s (2N+1)\ldots(2N-s+2)}.$$
    \end{enumerate}
\end{thm}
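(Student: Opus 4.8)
The plan is to reduce the fractional quadrature rule to a classical Gaussian quadrature rule and then invoke the standard Chebyshev-expansion error analysis. Setting $y = t^{\lambda}$ and writing $p(t) = P(t^{\lambda})$, this substitution turns $h_j(t)$ into genuine Lagrange polynomials $H_j(y)$ and the integral $\int_a^b p(t)w(t)\,dt$ into $\int P(y)W(y)\,dy$ with a transformed nonnegative weight $W$; because the nodes $t_j$ are the roots of the orthogonal M\"untz polynomial $p_{N+1}^{\lambda}$, the points $y_j = t_j^{\lambda}$ are exactly the Gauss nodes associated with $W$. In particular the quadrature weights are nonnegative and, by exactness on constants, satisfy $\sum_{j=0}^N w_j = \int_a^b w(t)\,dt = k_w$. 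After an affine normalization of the $y$-interval to $[-1,1]$ (which leaves $k_w$ unchanged), the rule is precisely an $(N+1)$-point Gauss rule, so the classical analytic- and finite-smoothness estimates apply to $P$.

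The next step is the Chebyshev expansion. I would write $P(y) = \sum_{k=0}^{\infty} a_k T_k(y)$, so that each $T_k(t^{\lambda})$ is a M\"untz polynomial of degree $k$. By linearity of the remainder functional, $R_N(p) = \sum_{k=0}^{\infty} a_k R_N(T_k)$, and by Theorem \ref{th 4.1} the rule integrates every M\"untz polynomial of degree at most $2N+1$ exactly, whence $R_N(T_k) = 0$ for all $k \le 2N+1$. Thus only the tail $k \ge 2N+2$ survives. A uniform estimate on the surviving terms follows from $|T_k(y)| \le 1$ together with the nonnegativity of the weights: $|R_N(T_k)| \le \int_a^b |T_k|\,w\,dt + \sum_{j=0}^N w_j|T_k(y_j)| \le 2k_w$ for every $k$.

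With these two ingredients the two bounds reduce to summing the Chebyshev coefficients. For part (a), analyticity of $P$ inside the Bernstein ellipse with $|P| \le M_r$ gives the classical coefficient bound $|a_k| \le 2M_r r^{-k}$; summing the geometric tail,
$$|R_N(p)| \le \sum_{k=2N+2}^{\infty} |a_k|\,|R_N(T_k)| \le 2k_w \sum_{k=2N+2}^{\infty} 2M_r r^{-k} = \frac{4k_w M_r}{r^{2N+2}(1-r^{-1})}.$$
For part (b), the finite-smoothness coefficient bound $|a_k| \le \frac{2V_s}{\pi\,k(k-1)\cdots(k-s)}$, valid for $k \ge s+1$ and hence for the whole tail once $N \ge \lfloor s/2\rfloor$, is summed via the telescoping identity $\frac{1}{k(k-1)\cdots(k-s)} = \frac{1}{s}\bigl(G(k) - G(k+1)\bigr)$ with $G(k) = \frac{1}{(k-1)(k-2)\cdots(k-s)}$, which collapses the tail to $\frac{1}{s(2N+1)(2N)\cdots(2N-s+2)}$ and yields the stated estimate.

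The main obstacle is the reduction step rather than the summation. Two points need genuine care: first, that the transformed integrand $P(y)$ really inherits the analyticity (part (a)) or the bounded-variation smoothness (part (b)) assumed on $p$ --- the map $y \mapsto y^{1/\lambda}$ is singular at the origin for non-integer $\lambda$, so the hypotheses must be read as conditions on $P$; and second, the interchange of the infinite Chebyshev series with the functional $R_N$, which requires uniform convergence of the expansion and is justified precisely under the stated smoothness. The nonnegativity of the Gauss weights and the identity $\sum_j w_j = k_w$, both used to obtain $|R_N(T_k)| \le 2k_w$, follow from the standard Gauss theory once the equivalence is in place, and everything after that is the bookkeeping displayed above.
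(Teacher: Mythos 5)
Your proposal is correct and follows the same skeleton as the paper's proof: a Chebyshev expansion, annihilation of the first $2N+2$ terms by the degree-$(2N+1)$ exactness of Theorem \ref{th 4.1}, the uniform bound $\lvert R_N(T_k)\rvert \le 2k_w$ on each surviving term, the classical coefficient-decay estimates in the analytic and bounded-variation cases, and the geometric/telescoping summations. The one substantive difference is where you place the Chebyshev expansion. The paper expands $p(t)$ in $T_j(t)$, i.e.\ in ordinary polynomials of $t$; for $\lambda \neq 1$ such a $T_j$ is not a M\"untz polynomial, so Theorem \ref{th 4.1} does not literally apply to it and the cancellation of the terms $j\le 2N+1$ is only justified when $\lambda=1$. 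Your version --- substituting $y=t^{\lambda}$, observing that the rule becomes an $(N+1)$-point Gauss rule for the transformed weight, and expanding $P(y)$ in $T_k(y)$ so that each $T_k(t^{\lambda})$ is a genuine M\"untz polynomial of degree $k$ --- repairs exactly this point, at the price of having to read the analyticity and bounded-variation hypotheses as conditions on $P$ in the variable $y$ rather than on $p$ in the variable $t$, which you flag honestly. You also make explicit the nonnegativity of the Gauss weights and the identity $\sum_j w_j = k_w$, both of which the paper uses silently in the step $\lvert I[T_j]-\sum_k w_k T_j(t_k)\rvert \le k_w+\sum_k w_k = 2k_w$. In short: same route, but your bookkeeping of the $\lambda$-substitution is the version under which the exactness step is actually licensed.
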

\begin{proof}
    First, to prove the part (a) of the theorem, we consider the Chebyshev expansion of  $p$,
        \begin{align}\label{cheb}
            p(t) = \frac{a_0}{2} + \sum_{j=1}^\infty a_j T_j(t),
        \end{align}
where \begin{align}
    a_j = \frac{2}{\pi} \int_{-1}^1 \frac{p(t)T_j(t)}{\sqrt{1-t^2}} dt.
\end{align}
Substituting the expansion (\ref{cheb}) in $R_N(p)$ and using  
Theorem (\ref{th 4.1}), we get
\begin{equation}\label{relat}
\begin{aligned}
 R_N(p) &= \sum_{j=2N+2}^\infty a_j E_N[T_j]\\
 &=\sum_{j=2N+2}^\infty a_j\left(I[T_j] - \sum_{k=0}^N w_k T_j(t_k)\right),
\end{aligned}
\end{equation}
where
\begin{align}
I[T_j] = \int_a^b T_j(t) w(t) dt. 
\end{align}
Since $\mid T_j(t)\mid \leq 1$ and the fractional quadrature rule is exact for polynomials of degree up to $2N+1$, therefore
\begin{equation}
\begin{aligned}\label{ineq}
\mid I[T_j] - \sum_{k=0}^N w_k T_j(t_k) \mid &\leq k_w + \sum_{k=0}^N w_k \\
& = 2k_w.
\end{aligned}
\end{equation}
Now, using \cite{MR4050406}, the Chebyshev coefficients $a_j$ satisfy the relation,
\begin{align}\label{coeff}
    \mid a_j \mid \leq \frac{2M_r}{r^j}, ~~~\forall j\geq 1.
\end{align}
Again using Equation (\ref{ineq}) and (\ref{coeff}), we get
\begin{align}
    \mid R_N(p) \mid &\leq \sum_{j=2N+2}^\infty 
    \frac{2M_r}{r^j}2k_w\\
    &= \frac{4k_wM_r}{r^{2N+2}(1-r^{-1})}.
    \end{align}
For proving the next part, again using \cite{MR4050406}, the Chebyshev coefficients $a_j$ for $j\geq s+1$ satisfy,
\begin{align}\label{Vp}
    \mid a_j \mid \leq \frac{2V_s}{\pi j (j-1) \ldots (j-m)}.
\end{align}
Using Equation (\ref{ineq}) and (\ref{Vp}) in (\ref{relat}),
\begin{align*}
    \mid R_N(p) \mid &\leq \sum_{j=2N+2}^\infty
    \frac{4k_wV_s}{\pi j (j-1)\ldots(j-s)}\\
    &=\frac{4k_wV_p}{\pi s} \sum_{j=2N+2}^\infty
    \left[\frac{1}{(j-1)\ldots(j-s)}-\frac{1}{j\ldots(j-s+1)}\right]\\
    &=\frac{4k_wV_s}{\pi s (2N+1)\ldots(2N-s+2)}.
    \end{align*}
\end{proof} 
The properties of the proposed fractional quadrature rule have been established, and its superior performance in approximating integrals involving fractional power functions has been demonstrated. In particular, it is easy to verify that when applied within the framework of M\"untz–Legendre scaling functions—which themselves contain fractional exponents—the fractional quadrature rule \eqref{4.5} provides significantly better accuracy than the classical quadrature method \cite{shen2011spectral}, due to its proven effectiveness in handling fractional power functions. The proposed fractional quadrature rule \eqref{4.5} is employed in the next section to evaluate the left Riemann–Liouville fractional integrals of the M\"untz–Legendre scaling functions.
%The properties of the fractional quadrature rule have been proved, and its advantages to approximate integrals of functions with fractional powers have been observed. It can be easily verified that using the above fractional quadrature rule (\ref{4.5}) is better than the classical quadrature rule \cite{shen2011spectral} in the approximation of functions by M\"untz-Legendre scaling functions since the M\"untz-Legendre scaling functions are fractional power functions. The proposed fractional quadrature rule (\ref{4.5}) is used to evaluate the left  Riemann-Liouville fractional integrals in the subsequent sections which will be further used to derive the integration operational matrix for the left  Riemann-Liouville fractional integrals of the M\"untz-Legendre scaling functions.
 \section{Operational Matrix}\label{section4}
 In this section, we derive the operational matrix for the left Riemann–Liouville fractional integral of the M\"untz-Legendre scaling functions. To facilitate the computation, we first construct the operational matrix for the piecewise fractional power functions, which form the spanning set of the M\"untz-Legendre scaling functions. We then determine the corresponding transformation matrix, and by combining it with the operational matrix of the piecewise fractional power functions, we obtain the operational matrix for the M\"untz-Legendre scaling functions.
 \subsection{Fractional Integration of Piecewise Fractional Power  Functions }
%The operational matrix for the left Riemann-Liouville fractional integral of the piecewise fractional power functions \cite{rahimkhani2018muntz} is derived, which will be used in further sections to calculate the operational matrix for the left Riemann-Liouville fractional integral of    M\"untz-Legendre scaling function $\phi_{n,m}^J(t)$.  
The piecewise fractional power  functions on $[0,1)$ \cite{rahimkhani2018muntz} which are analogous to the M\"untz-Legendre scaling functions are defined as
\begin{align} \label{piece}
    \mathcal{T}_{n,m}^J(t)=\begin{cases}
			\left(t-\frac{n-1}{2^{J-1}}\right)^{\lambda_m}, & \text{if $\frac{n-1}{2^{J-1}}\leq t <\frac{n}{2^{J-1}}$ },\\
            0, & \text{otherwise},
		 \end{cases}
\end{align}
where $n=1,2, \ldots, 2^{J-1}$ is the translation parameter, $m=0,1,\ldots,M-1$ is the degree of the piecewise fractional power  polynomials and $\lambda_m=m\lambda$ ($\lambda$ is a real constant).\\
Let us write
\begin{align*}
    \textbf{T}(t)=&[\mathcal{T}_{1,0}^J(t),\mathcal{T}_{1,1}^J(t),\ldots,\mathcal{T}_{1,M-1}^J(t),\mathcal{T}_{2,0}^J(t),\ldots,\mathcal{T}_{2,M-1}^J(t),\ldots,\mathcal{T}_{2^{J-1},0}^J(t),\\
    &\ldots,\mathcal{T}_{2^{J-1},M-1}^J(t)]^T\\
    =&[\mathcal{T}_{1}(t),\mathcal{T}_2(t),\ldots,\mathcal{T}_M(t),\mathcal{T}_{M+1}(t),\ldots,\mathcal{T}_{2^{J-1}M}(t)]^T.
\end{align*}
The left Riemann-Liouville fractional integral of $\mathcal{T}_{n,m}^J(t)$ is given by
\begin{align}\label{integrall}
    _{0}I_{t}^\alpha\mathcal{T}_{n,m}^J(t)=\frac{1}{\Gamma(\alpha)}\int_0^t(t-z)^{\alpha-1}\mathcal{T}_{n,m}^J(z)dz.
\end{align}
To evaluate this integral, $t$ is considered in different subintervals, and the computation is carried out in three distinct cases.
\begin{case}{$\boldsymbol{0\leq t<\frac{n-1}{2^{J-1}}}$}\\
From the definition of $\mathcal{T}_{n,m}^J(t)$ in (\ref{piece}), it is concluded that the support of this function is the semi-closed interval $[\frac{n-1}{2^{J-1}},\frac{n}{2^{J-1}})$. Therefore, in this case, the value of this function becomes zero everywhere in the interval $(0,t)$, and hence,
\begin{align*}
 _{0}I_{t}^\alpha\mathcal{T}_{n,m}^J(t)=\frac{1}{\Gamma(\alpha)}\int_0^t(t-z)^{\alpha-1}\mathcal{T}_{n,m}^J(z)dz = 0.   
\end{align*}
    
\end{case}
\begin{case}{$\boldsymbol{\frac{n-1}{2^{J-1}}\leq t < \frac{n}{2^{J-1}}}$}\\
Again from (\ref{piece}), since the support of $\mathcal{T}_{n,m}^J(t)$ is the closed interval $[\frac{n-1}{2^{J-1}},\frac{n-1}{2^{J-1}})$. Hence, when ${\frac{n-1}{2^{J-1}}\leq t < \frac{n}{2^{J-1}}}$, the value of this function is non-zero only in $[{\frac{n-1}{2^{J-1}},t}]$ between the limits of \\ integration $0$ and $t$. Thus, (\ref{integrall}) can be written as
\begin{align*}
 _{0}I_{t}^\alpha\mathcal{T}_{n,m}^J(t)&=\frac{1}{\Gamma(\alpha)}\int_{\frac{n-1}{2^{J-1}}}^t(t-z)^{\alpha-1}\left(z-\frac{n-1}{2^{J-1}}\right)^{\lambda_m}dz\\
 &=\frac{1}{\Gamma(\alpha)}\int_{\frac{n-1}{2^{J-1}}}^t\left[t-\frac{n-1}{2^{J-1}}-\left(z-\frac{n-1}{2^{J-1}}\right)\right]^{\alpha-1}\left(z-\frac{n-1}{2^{J-1}}\right)^{\lambda_m}dz\\
 &=\frac{1}{\Gamma(\alpha)}\left(t-\frac{n-1}{2^{J-1}}\right)^{\alpha+\lambda_m-1}\int_{\frac{n-1}{2^{J-1}}}^t\Bigg\{\left(1-\frac{z-\frac{n-1}{2^{J-1}}}{t-\frac{n-1}{2^{J-1}}}\right)^{\alpha-1}\\&\quad
 \times\left(\frac{z-\frac{n-1}{2^{J-1}}}{t-\frac{n-1}{2^{J-1}}}\right)^{\lambda_m}\Bigg\}dz.
\end{align*}
Now putting $\left(\frac{z-\frac{n-1}{2^{J-1}}}{t-\frac{n-1}{2^{J-1}}}\right)=x$ and $dz=\left(t-\frac{n-1}{2^{J-1}}\right)\cdot dt$, the above integral becomes,
\begin{align*}
 _{0}I_{t}^\alpha\mathcal{T}_{n,m}^J(t)&= \frac{1}{\Gamma(\alpha)}  \left(t-\frac{n-1}{2^{J-1}}\right)^{\alpha+\lambda_m}\int_0^1 (1-x)^{\alpha-1} x^{\lambda_m}dt\\
 &=\frac{1}{\Gamma(\alpha)}  \left(t-\frac{n-1}{2^{J-1}}\right)^{\alpha+\lambda_m}\frac{\Gamma(\alpha)\Gamma(\lambda_m+1)}{\Gamma(\alpha+\lambda_m+1)}.
\end{align*}
    
\end{case}
\begin{case}${\boldsymbol{t\geq \frac{n}{2^{J-1}}}}$\\ \label{case3}
The limit of integration is $(0,t)$, but the function is non-zero in this interval only when\\ $\frac{n-1}{2^{J-1}} \leq t <\frac{n}{2^{J-1}}$. Thus the limit of integration can be changed to $\left(\frac{n-1}{2^{J-1}},\frac{n}{2^{J-1}}\right)$. Hence, in this case, the integral (\ref{integrall}) becomes
\begin{align}\label{definite intg}
\frac{1}{\Gamma(\alpha)}\int_{\frac{n-1}{2^{J-1}}}^{\frac{n}{2^{J-1}}}(t-z)^{\alpha-1}\left(z-\frac{n-1}{2^{J-1}}\right)^{\lambda_m}dz.    
\end{align}
Since both terms in the integrand product are raised to a fractional power, this definite integral cannot be solved directly using the exact integration techniques. Hence, the fractional quadrature rule (\ref{4.5}) is used to find the integral. Note that to achieve exactness in Equation \ref{definite intg}, we apply suitable substitutions to the integrand. This process correspondingly modifies the weight function. In this case, the nodes are taken as the roots of the Chebyshev polynomial of degree $M$ in the interval $\left[0,\frac{1}{2^{J-1}}\right]$. By employing the aforementioned fractional quadrature rule (\ref{4.5}), the integral (\ref{definite intg}) is transformed into
\begin{align*}
  _{0}I_{t}^\alpha\mathcal{T}_{n,m}^J(t)=\frac{1}{\Gamma(\alpha)}\sum_{j=0}^N t_j^{\lambda_m}w_j,
  \end{align*}
 where \quad $w_j=\int_{\frac{n-1}{2^{J-1}}}^{\frac{n-1}{2^{J-1}}} h_j(x)(t-x-\frac{n-1}{2^{J-1}})^{\alpha-1} dx $
  and  $h_j$ as defined in (\ref{hj}).
\end{case}
\begin{rmk}
    Since, exactness upto degree $M-1$ is sufficient for the exact computation of integrals involved in this work, the Chebyshev roots have been taken as nodes in the fractional quadrature rule (\ref{4.5}). One can also use the roots of orthogonal M\"untz polynomials as per their need of higher degree of exactness.
\end{rmk}
Using the three cases discussed above, the operational matrix $F(t,\alpha)$ for the fractional-order left Riemann-Liouville integration of piecewise fractional power functions is constructed in the following theorem.
 \begin{thm}
     The operational matrix $F(t,\alpha)$ for the fractional-order left Riemann-Liouville   integration of piecewise fractional power  functions is given by
     \begin{align}\label{opphi}
 _{0}I_{t}^\alpha\textbf{T}(t)\approx F(t,\alpha)\textbf{T}(t),        
     \end{align}
     where $F(t,\alpha)$ is the $2^{J-1}M\cross 2^{J-1}M $ upper triangular  matrix as follows
     \[F(t,\alpha) = [B_{i,j}]=
     \begin{pmatrix}
         B_{1,1} & B_{1,2} & \ldots & B_{1,2^{J-1}} \\  
0 & B_{2,2} & \ldots & B_{2,2^{J-1}} \\
\vdots & \vdots & \ddots & \vdots \\  
0 & 0 & \ldots & B_{2^{J-1},2^{J-1}}   
     \end{pmatrix}, \]
    
     and $B_{i,j}$ is an $M \cross M$ matrix such that
     $$B_{i,i}=diag[a_{1,1}(i),a_{2,2}(i)\ldots,a_{M,M}(i)],$$
     where $$a_{r,r}(i)=\frac{1}{\Gamma(\alpha)}\left(t-\frac{(i-1)}{2^{J-1}}\right)^\alpha \frac{\Gamma(\alpha)\Gamma(\lambda_{r-1}+1)}{\Gamma(\alpha+\lambda_{r-1}+1)},$$
  \\   and for $i<j$, $B_{i,j}$ is given by
  \[B_{i,j} =
  \begin{pmatrix}
      b_{1,1}(i) & 0 &\ldots & 0\\
      b_{2,1}(i) & 0 &\ldots & 0\\
      \vdots & \vdots &\dots &\vdots\\
      b_{M,1}(i) & 0 & \ldots & 0
      \end{pmatrix},
  \]
  where
  $$b_{s,1}(i)  = \frac{1}{\Gamma(\alpha)}\sum_{j=0}^N\left(t_j\right)^{\lambda_{s-1}}w_j.$$
 In this case, $w_j$ refers to the expression as defined in Case \ref{case3}.
 \end{thm}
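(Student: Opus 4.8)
The plan is to assemble the three cases computed immediately before the theorem into the block form of $F(t,\alpha)$, exploiting the disjoint-support structure of the piecewise fractional power functions. First I would fix the lexicographic indexing, so that the $k$-th entry of $\textbf{T}(t)$ is $\mathcal{T}_{n,m}^J$ with $k=(n-1)M+(m+1)$, block-row $i$ collecting the functions to be integrated with translation $n=i$ and block-column $j$ collecting the basis functions with translation $n'=j$. The key reduction is that each $\mathcal{T}_{n,m}^J$ is supported only on $[\frac{n-1}{2^{J-1}},\frac{n}{2^{J-1}})$; hence for $t$ in a fixed subinterval $n^{*}$, only column-block $n^{*}$ of $\textbf{T}(t)$ is nonzero, so $F(t,\alpha)\textbf{T}(t)$ restricted to row-block $i$ reduces to $B_{i,n^{*}}$ acting on that single nonzero block. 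Verifying the identity then amounts to checking, subinterval by subinterval, that $B_{i,n^{*}}$ reproduces $_{0}I_{t}^\alpha\mathcal{T}_{i,m}^J(t)$.

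Next I would match the three cases to the three regimes $n^{*}<i$, $n^{*}=i$, and $n^{*}>i$. For $n^{*}<i$ the integrated function is supported entirely to the right of $t$, so Case 1 gives zero, which is exactly the vanishing lower-triangular part $B_{i,n^{*}}=0$. For $n^{*}=i$, Case 2 yields $\frac{1}{\Gamma(\alpha)}\left(t-\frac{i-1}{2^{J-1}}\right)^{\alpha+\lambda_m}\frac{\Gamma(\alpha)\Gamma(\lambda_m+1)}{\Gamma(\alpha+\lambda_m+1)}$, which I would rewrite as $a_{m+1,m+1}(i)\,\mathcal{T}_{i,m}^J(t)$ by factoring out $\left(t-\frac{i-1}{2^{J-1}}\right)^{\lambda_m}=\mathcal{T}_{i,m}^J(t)$; since this sends each basis function to a ($t$-dependent) scalar multiple of itself, the diagonal block $B_{i,i}=\mathrm{diag}[a_{1,1}(i),\ldots,a_{M,M}(i)]$ is forced. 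For $n^{*}>i$, Case 3 gives the quadrature value $\frac{1}{\Gamma(\alpha)}\sum_{l=0}^N t_l^{\lambda_m}w_l$; the crucial observation here is that $\lambda_0=0$ makes $\mathcal{T}_{n^{*},0}^J(t)\equiv 1$ on that subinterval, so this value equals $b_{m+1,1}(i)\,\mathcal{T}_{n^{*},0}^J(t)$ with every coefficient of the higher-degree functions $\mathcal{T}_{n^{*},m'}^J$, $m'\ge 1$, equal to zero. This is precisely why the off-diagonal blocks $B_{i,j}$, $i<j$, carry a nonzero first column only.

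The argument is organized bookkeeping rather than a deep estimate, so I do not expect a genuine obstacle; the points requiring care are conceptual. The first is recognizing that the operational matrix is legitimately allowed to depend on $t$: Case 2 produces the exponent $\alpha+\lambda_m$, which does not lie in the span of the basis degrees $\{\lambda_{m'}\}$, and Case 3 produces a $t$-dependent value through the weights $w_l$, so the $t$-dependence of $a_{r,r}(i)$ and $b_{s,1}(i)$ is exactly the mechanism that lets a finite matrix reproduce these non-closed-form results. The second is tracking the index shift $m=r-1=s-1$ between the $0$-based degree and the $1$-based matrix rows. Finally, I would note that the relation is written with $\approx$ rather than $=$ solely because Case 3 invokes the fractional quadrature rule \eqref{4.5}; Cases 1 and 2 hold exactly, so the only approximation enters through the off-diagonal blocks.
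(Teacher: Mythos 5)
Your proposal is correct and follows essentially the same route as the paper: both arguments take the three case computations for $_{0}I_{t}^\alpha\mathcal{T}_{n,m}^J(t)$, rewrite the middle case as $a_{m+1,m+1}(i)\,\mathcal{T}_{i,m}^J(t)$ and the third case as a multiple of $\mathcal{T}_{n,0}^J(t)\equiv 1$, and read off the block structure of $F(t,\alpha)$. The only difference is that you spell out the subinterval-by-subinterval bookkeeping (why the diagonal blocks are diagonal and why the off-diagonal blocks have a nonzero first column only), which the paper compresses into the single phrase ``treating the above expression as a linear combination of basis functions.''
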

\begin{proof}
 The integral (\ref{integrall}) for different values of $t\in [0,1)$ can be written as-
\begin{align}\label{integral}
    _{0}I_{t}^\alpha\mathcal{T}_{n,m}^J(t)\approx\begin{cases}
			0, &  0\leq t<\frac{n-1}{2^{J-1}}\\
            \frac{1}{\Gamma(\alpha)}  \left(t-\frac{n-1}{2^{J-1}}\right)^{\alpha+\lambda_m}\beta(\alpha,\lambda_{m}+1), &\frac{n-1}{2^{J-1}} \leq t <\frac{n}{2^{J-1}} \\
            \frac{1}{\Gamma(\alpha)}\sum_{j=0}^N\left(t_j\right)^{\lambda_m}w_j , & t \geq\frac{n}{2^{J-1}}.
		 \end{cases}
\end{align}
This can also be written as
\begin{align}
    _{0}I_{t}^\alpha\mathcal{T}_{n,m}^J(t)\approx\begin{cases}
			0, &  0\leq t<\frac{n-1}{2^{J-1}}\\
            \frac{1}{\Gamma(\alpha)}  \left(t-\frac{n-1}{2^{J-1}}\right)^{\alpha}\beta(\alpha,\lambda_{m}+1) \mathcal{T}_{n,m}(t), &\frac{n-1}{2^{J-1}} \leq t <\frac{n}{2^{J-1}} \\
            \frac{1}{\Gamma(\alpha)}\sum_{j=0}^N\left(t_j\right)^{\lambda_m}w_j \mathcal{T}_{n,0}(t) , & t \geq\frac{n}{2^{J-1}}.
		 \end{cases}
\end{align}
Treating the above expression as a linear combination of basis functions in $\textbf{T}(t)$, we get,
\begin{align*}
  _{0}I_{t}^\alpha\mathcal{T}(t) \approx F(t,\alpha) \mathcal{T}(t). 
\end{align*}
\end{proof} 
Using the operational matrix $F(t,\alpha)$ for the fractional-order left Riemann-Liouville integration of piecewise fractional power functions, the operational matrix $F(t,\alpha)$ for the fractional-order left Riemann-Liouville integration of M\"untz-Legendre scaling functions is found by determining the transformation matrix $\Omega$ in the subsequent subsection.
\subsection{Transformation Matrix}
The aim is to find the integration operational matrix for M\"untz-Legendre scaling functions, denoted as $\Phi$. To achieve this, a transformation matrix is identified, allowing for the derivation of the integration operational matrix for $\Phi$ using $F(t,\alpha)$. Let the transformation matrix of $\Phi$ to $\textbf{T}$ be denoted by $\Omega$, i.e.,
\begin{align}\label{11}
     \textbf{T}_{2^{J-1} M}(t)=\Omega_{{2^{J-1} M}\cross {2^{J-1} M}} \Phi_{2^{J-1} M}(t).
\end{align}
Equating the components of vectors on both sides,
\begin{align}
    \mathcal{T}_i(t)=\sum_{j=1}^K w_{ij} \phi_j(t), \ \ \ i=1,2,\ldots K,
    \end{align}
    where
    \begin{align}
        w_{ij}=\langle \mathcal{T}_i, \phi_j \rangle = \int_0^1 \mathcal{T}_i(t) \phi_j(t) dt, \ \ \ i=1,2,\ldots,K, \ \ \ j=1,2,\ldots,K,
    \end{align}
    and $\Omega=[w_{ij}]$ is a matrix of order ${2^{J-1} M}\cross {2^{J-1} M}$.
    \begin{ex}
        Let $\lambda=0.75.$ For $J=2$ and $M=3$, the transformation matrix is given by 
        \[
        \Omega=
        \begin{pmatrix}
          0.7071 &0 &0 &0 &0 &0 \\
          0.2403    &0.1140 &0 &0 &0 &0\\
          0.1000  &0.0730  &0.0173  &0  &0  &0\\
           0    &0  &0   &0.7071        &0         &0\\
           0  &0 &0    &0.2403    &0.1140         &0\\
           0 &0  &0    &0.1000    &0.0730   &0.0173
        \end{pmatrix}.
        \]
    \end{ex}
    In the next subsection, the operational matrix $P(t,\alpha)$ for the fractional-order left Riemann-Liouville integration of M\"untz-Legendre scaling functions is obtained using the transformation matrix $\Omega$ and the operational matrix $F(t,\alpha)$ for the fractional-order left Riemann-Liouville integration of piecewise fractional power functions.
\subsection{M\"untz-Legendre Scaling Functions Operational Matrix }
\begin{thm}
   The operational matrix $P(t,\alpha)$ for the fractional-order left Riemann-Liouville   integration of M\"untz-Legendre scaling functions is given by
     \begin{align}\label{operational matrix}
 _{0}I_{t}^\alpha\Phi(t)\approx P(t,\alpha)\Phi(t) ,       
     \end{align}
     where $P(t,\alpha)$ is the $2^{J-1}M\cross 2^{J-1}M $  matrix expressed as 
     \begin{align}\label{opmat}
         P(t,\alpha)=\Omega^{-1}F(t,\alpha) \Omega.
     \end{align}
\end{thm}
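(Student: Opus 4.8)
The plan is to exploit the change-of-basis relation in Equation (\ref{11}) together with linearity of the fractional integral operator, reducing the claim to a routine similarity transformation. First I would observe that the transformation matrix $\Omega$ is invertible. From the structure exhibited in the example (and in general by construction), $\Omega$ is block lower-triangular with each diagonal block itself lower-triangular and carrying nonzero diagonal entries, so $\det\Omega\neq0$ and $\Omega^{-1}$ exists. This justifies rewriting Equation (\ref{11}) as $\Phi(t)=\Omega^{-1}\textbf{T}(t)$.

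Next I would apply the left Riemann--Liouville operator $_{0}I_{t}^{\alpha}$ to both sides of the defining relation $\textbf{T}(t)=\Omega\,\Phi(t)$. Since $\Omega$ is a constant matrix and $_{0}I_{t}^{\alpha}$ acts linearly (it is an integral against the kernel $(t-z)^{\alpha-1}/\Gamma(\alpha)$), the matrix $\Omega$ passes through the operator, giving
\begin{align*}
_{0}I_{t}^{\alpha}\textbf{T}(t)=\Omega\,\bigl({}_{0}I_{t}^{\alpha}\Phi(t)\bigr).
\end{align*}
Multiplying on the left by $\Omega^{-1}$ and invoking the operational matrix identity (\ref{opphi}) for the piecewise fractional power functions, $_{0}I_{t}^{\alpha}\textbf{T}(t)\approx F(t,\alpha)\textbf{T}(t)$, I obtain
\begin{align*}
_{0}I_{t}^{\alpha}\Phi(t)=\Omega^{-1}\,{}_{0}I_{t}^{\alpha}\textbf{T}(t)\approx \Omega^{-1}F(t,\alpha)\,\textbf{T}(t).
\end{align*}

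Finally I would substitute $\textbf{T}(t)=\Omega\,\Phi(t)$ once more into the right-hand side to return everything to the $\Phi$-basis, yielding
\begin{align*}
_{0}I_{t}^{\alpha}\Phi(t)\approx \Omega^{-1}F(t,\alpha)\,\Omega\,\Phi(t)=P(t,\alpha)\,\Phi(t),
\end{align*}
which identifies $P(t,\alpha)=\Omega^{-1}F(t,\alpha)\,\Omega$ as claimed in (\ref{opmat}). The argument is essentially pure linear algebra, so there is no serious analytic obstacle; the only point requiring genuine care is the invertibility of $\Omega$, and one should be explicit that the approximate equality $\approx$ is inherited directly from (\ref{opphi})---the similarity transformation introduces no further error beyond that already present in the operational matrix $F(t,\alpha)$, whose sole source of approximation is the fractional quadrature evaluation used in Case \ref{case3}.
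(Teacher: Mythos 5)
Your proposal is correct and follows essentially the same route as the paper's own proof: rewrite $\Phi(t)=\Omega^{-1}\textbf{T}(t)$, pass the constant matrix through the linear operator $_{0}I_{t}^{\alpha}$, invoke the operational matrix identity (\ref{opphi}) for $\textbf{T}(t)$, and return to the $\Phi$-basis to read off $P(t,\alpha)=\Omega^{-1}F(t,\alpha)\Omega$. Your explicit remark on the invertibility of $\Omega$ (via its triangular structure with nonzero diagonal) is a small point of added care that the paper leaves implicit, but it does not change the argument.
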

\begin{proof}
  The integration operational matrix of M\"untz-Legendre scaling functions is given by
 \begin{align}\label{111}
     _{0}I_{t}^\alpha\Phi(t)\approx P(t,\alpha)\Phi(t).  
 \end{align}
 Equation (\ref{11}) can be used to rewrite $\Phi(t)$ as
 \begin{equation*}
 \Phi(t) =\Omega^{-1} \mathcal{T}(t).
  % &&\approx  \Omega_{-1}F(t,\alpha)\mathcal{T}(t)\\ 
\end{equation*}
Taking the Riemann-Liouville integral of the above expression, we obtain
 \begin{align}\label{1st}
    _{0}I_{t}^\alpha\Phi(t) =\  _{0}I_{t}^\alpha\Omega^{-1}\mathcal{T}(t).  
 \end{align}
Since, $\Omega$ is a constant matrix, therefore from (\ref{opphi}), we get 
\begin{equation}\label{2nd}
    \begin{aligned}
        _{0}I_{t}^\alpha\Phi(t)&={\Omega^{-1}} _{0}I_{t}^\alpha\mathcal{T}(t)\\
        &\approx \Omega^{-1}F(t,\alpha)\mathcal{T}(t).
    \end{aligned}
\end{equation}
Hence, using Equation (\ref{1st}) and (\ref{2nd}), we obtain 
\begin{align*}
    P(t,\alpha)\Phi(t)&=P(t,\alpha)\Omega^{-1} \mathcal{T}(t)\\
    &\approx \Omega^{-1} F(t,\alpha) \mathcal{T}(t).
\end{align*}
Therefore,  the operational matrix $P(t,\alpha)$ for the fractional-order left \\
Riemann-Liouville integration of M\"untz-Legendre scaling functions is given by
\begin{align*}
   P(t,\alpha)= \Omega^{-1} F(t,\alpha) \Omega.  
\end{align*}
\end{proof}
In the following section, the numerical scheme for solving fractional differential equations have been derived using the integration operational matrix (\ref{opmat}). Moreover, the benefits of the proposed method for solving fractional differential equations are verified. Some test problems are selected to discuss the efficiency of the method, and these problems are solved using the collocation method \cite{kumar2021collocation}.
%\section{Numerical Scheme}
%%%%%%%%%%%%%%%%%%%%%%%%%%%%%%%%%%%%%%%%%%%%%%%%%%%%%%%%%%
\section{Description of the Numerical Method and Test Examples}\label{section5}
In this section, we develop a new collocation method for solving the FDE \eqref{eq1prblm} using the M\"untz-Legendre scaling functions and the operational matrix derived in Section \ref{section4}. The Caputo derivative of $y(t)$ is approximated using the M\"untz-Legendre scaling functions as in (\ref{span}), i.e.,
\begin{align}\label{eqn1}
    {}_0^CD_t^{\alpha} y(t) \approx C^T\Phi(t).
\end{align}
where $C=[c_1,c_2,\ldots,c_{2^{J-1}M}]$. Again, using the operational matrix for fractional integration in (\ref{operational matrix}), we have
\begin{align*}
    {}_0I_t^{\alpha}{}_0^CD_t^{\alpha} y(t)&\approx C^T {}_0I_t^{\alpha} \Phi(t)\\
    &\approx C^TP(t,\alpha)\Phi(t).
\end{align*}
Thus, we can conclude 
\begin{align}\label{eqn2}
    y(t)\approx C^TP(t,\alpha)\Phi(t)+y(0).
\end{align}
Using (\ref{eqn1}) and (\ref{eqn2}), the FDE \eqref{eq1prblm} can be written as,
\begin{align*}
C^T\Phi(t)\approx f\big(t, C^TP(t,\alpha)\Phi(t)+y_0\big).
\end{align*}
Imposing equality in these equations at the collocation points, a system of $2^{J-1}M$ algebraic equations are obtained as
\begin{align}
  C^T\Phi(t_n)\approx f\big(t_n, C^TP(t_n,\alpha)\Phi(t_n)+y_0\big).  
\end{align}
Solving these algebraic equations, the values of $2^{J-1}M$ constants  $c_1$, $c_2$ \ldots, $c_{2^{J-1}M}$ can be calculated, which can be used to approximate $y(t)$ at any other point in $[t_0,t_f].$\par 

Now, we demonstrate the effectiveness of the proposed method through several test examples, highlighting in particular how the fractional quadrature rule enhances the accuracy and overall performance of the numerical method. The first example compares the accuracy of the proposed method with the Block-Pulse Method (BPM) \cite{kumar2021collocation,li2011numerical,babolian2008new} for a fixed parameter value $\lambda = 1$. The second example further strengthens the applicability of the fractional quadrature rule by examining its performance in approximating fractional power functions for different values of $\lambda$. The third example illustrates the solution of a non-linear FDE using the proposed approach. Finally, the last example presents a case in which the most accurate results are obtained when the order of the Caputo derivative $\alpha$ matches the parameter $\lambda$. In all examples, we employ the shifted Chebyshev roots of the second kind as collocation points and compute the $L_2$-error, defined as
$$
\begin{aligned}
 e_y=\left\|y(t)- \left(C^TP(t,\alpha)\Phi(t)+y_0\right)\right\|_2.
\end{aligned}
$$ 

\begin{ex}\label{ex1}
In this example, consider the FDE\begin{align*}
   { }_0^CD_t^\alpha y(t) +y(t) &=\frac{24 t^{4-\alpha}}{\Gamma(5-\alpha)}+t^4,~~~ t\in (0,1],\\
   y(0)&=0.
\end{align*}

   \begin{table*}
\centering
\begin{tabular}{@{}|l|c|c|c|c|c|@{}}
\hline
$J$ & $M$ & Using BPM
& Using the proposed method  
 \\
\hline
$2$    & 2 &1.3436e-01 & 3.2510e-02  \\
\hline
$2$    & 3 &3.6372e-02 & 1.3959e-03 \\
\hline
$2$    &4 & 1.1720e-02& 1.3899e-16 \\
\hline
$3$    &2  &3.2555e-02 & 8.2362e-03 \\
\hline
$3$    & 3  &6.9724e-03 &8.6785e-05  \\
\hline
$3$    & 4 & 2.9600e-03 & 8.0865e-17 \\
\hline
$4$    & 2  & 5.8852e-03 & 2.1317e-03 \\
\hline
$4$    & 3 &1.4276e-03 &5.4644e-06 \\
\hline
$4$    & 4  &7.4422e-04 & 2.3108e-16 \\

\hline
\end{tabular}
\caption{Error ($e_y$) in approximation for $\alpha=1$ and  $\lambda=1$ in Example \ref{ex1}.}
\label{table2}
\end{table*}
 
% \FloatBarrier
\end{ex}
The results of this example highlight the superior efficiency of the proposed method compared to existing techniques in the literature. The error in the approximation of $y(t)$ for  $\lambda=1$ is shown in Table \ref{table2}. It can be observed that the efficiency of the proposed method is better than the Block-Pulse method, and the error in approximation is decreased significantly in our method for small values of $J$ and $M$  when compared to the Block-Pulse method \cite{kumar2021collocation}.

In the next example, it is shown that how better approximations are obtained for different $\lambda$'s, where the choice of $\lambda$ depends on the function that is being approximated. Before proceeding to further examples, the presentation of the formula for the left Caputo derivative of the sine function, which will be utilized in subsequent discussions, is essential.
\begin{rmk}\label{capsin}
The left Caputo fractional derivative of the sine function is given \cite{nemati2018numerical} as follows.
$$
\begin{aligned}
{ }_0^CD_t^\alpha  \sin \lambda t & =-\frac{1}{2} i(i \lambda)^m t^{m-\alpha}\left(E_{1, m-\alpha+1}(i \lambda t)-(-1)^m E_{1, m-\alpha+1}(-i \lambda t)\right),
\end{aligned}
$$
where $E_{\alpha, \beta}(t)$ is the two-parameter Mittag-Leffler function given by
$$
E_{\alpha, \beta}(t)=\sum_{k=0}^{\infty} \frac{t^k}{\Gamma(\alpha k+\beta)}.
$$
\end{rmk}
\begin{ex}\label{ex3}
 In this example, consider the FDE
    \begin{align*}
         { }_0^CD_t^\alpha y(t) &= f(t),~~~ t\in (0,1],  \quad\text{where} \quad f(t)={ }_0^CD_t^\alpha sin(\pi t^{\frac{3}{2}}), \\
         y(0)&=0.
    \end{align*}
    
    % \FloatBarrier
%     \begin{table*}[h]
%     \centering
% \caption{Error in approximation for $\alpha=1$ with Block pulse functions for different $\lambda$}
% \label{table3}
% \begin{tabular}{@{}lrrrrc@{}}
% \hline
% $J$ & $M$ & {$\lambda=1$}
% &{$\lambda=0.75$} & {$\lambda=0.5$} & {$\lambda=0.25$}   
%  \\
% \hline
% $2$    & 2 &$3.342 \cross 10^{-1}$ & $4.561 \cross 10^{-1}$ & $7.135 \cross 10^{-1}$ & $2.905 \cross 10^{-1}$   \\
% $2$    & 3 &$4.98 \cross 10^{-2}$ & $2.246 \cross 10^{-1}$ & $9.801 \cross 10^{-1}$ & $1.748 \cross 10^{-1}$\\
% $2$    &4 & $3.29 \cross 10^{-2}$ & $6.92 \cross 10^{-2}$ & $2.142 \cross 10^{-1}$ & $1.777 \cross 10^{-1}$\\
% $3$    &2  &$8.32 \cross 10^{-2}$ & $8.84 \cross 10^{-2}$ & $9.53 \cross 10^{-2}$ &$1.004 \cross 10^{-1}$\\
% $3$    & 3  &$1.57 \cross 10^{-2}$ &$6.37 \cross 10^{-2}$ &$2.323 \cross 10^{-1}$ &$5.59 \cross 10^{-2}$  \\
% $3$    &4 &$8.3 \cross 10^{-3}$ & $9.3 \cross 10^{-3}$ & $1.124 \cross 10^{-1}$ &$3.49 \cross 10^{-2}$\\
% $4$    & 2  & $1.97 \cross 10^{-2}$ &$1.55 \cross 10^{-2}$ &$3.61 \cross 10^{-2}$ &$5.17 \cross 10^{-2}$\\
% $4$    & 3 &$4.3 \cross 10^{-3}$ &$1.68 \cross 10^{-2}$ &$4.73 \cross 10^{-2}$ &$1.76 \cross 10^{-2}$ \\
% $4$    & 4  &$3 \cross 10^{-3}$ & $4 \cross 10^{-3}$ &$2.23 \cross 10^{-2}$ &$8 \cross 10^{-3}$\\

% \hline
% \end{tabular}
% \end{table*}
   
    \begin{table*}
    \centering
\begin{tabular}{@{}|l|c|c|c|c|c|@{}}
\hline
$J$ & $M$ & {$\lambda=1$}
&{$\lambda=0.75$} & {$\lambda=0.5$} & {$\lambda=0.25$}   
 \\
 \hline
2 &	2 &	1.8674e-01 &	2.2782e-01 &	2.9963e-01 &	4.2013e-01 \\	
\hline
2 &	3 &	3.2899e-02 &	2.7766e-02 &	4.8977e-02 &	2.0227e-01 \\	
\hline
2 &	4 &	1.2077e-02 &	8.3128e-03 &	3.6302e-02 &	1.6337e-01 \\	
\hline
3 &	2 &	5.2043e-02 &	6.3909e-02 &	8.7042e-02 &	1.2749e-01 \\	
\hline
3 &	3 &	8.6351e-03 &	5.7595e-03 &	1.3680e-02 &	6.3166e-02 \\	
\hline
3 &	4 &	4.5232e-03 &	2.2064e-03 &	3.9807e-03 &	3.4499e-02 \\	
\hline
4 &	2 &	1.6229e-02 &	2.0684e-02 &	3.0681e-02 &	4.8352e-02 \\	
\hline
4 &	3 &	2.9575e-03 &	1.8242e-03 &	3.6946e-03 &	2.0628e-02 \\	
\hline
4 &	4 &	1.6453e-03 &	8.4347e-04 &	6.6394e-04 &	9.3044e-03 \\	
\hline
\end{tabular}
\caption{Error ($e_y$) in approximation for $\alpha=1$ with the proposed method for different $\lambda$ in Example \ref{ex3}.}
\label{table4}
\end{table*}
\end{ex}
The error in approximating $y(t)$ across various $\lambda$ values is depicted in Table \ref{table4}, where the most accurate approximation is achieved with $\lambda=0.5$ and $0.75$. It can be noted that the method's precision is influenced by the specific functions being analyzed, particularly by the presence of fractional powers within these functions. This observation highlights the utility of the fractional quadrature rule in effectively approximating functions characterized by fractional powers.

Next, the method's applicability to a non-linear problem is demonstrated by analyzing a non-linear fractional initial value problem and its corresponding convergence results.
\begin{ex}\label{ex4}
  In this example, consider the non-linear FDE
    \begin{align*}
         { }_0^CD_t^\alpha y(t)+y^2(t) &= \frac{2t^{2-\alpha}}{\gamma(3-\alpha)}+t^4,~~~ t\in (0,1],   \\
         y(0)&=0.
    \end{align*}  
 \FloatBarrier      
    \begin{table*}[!h]
    \centering
\begin{tabular}{@{}|l|c|c|c|c|c|@{}}
\hline
$J$ & $M$ & {$\lambda=1$}
&{$\lambda=0.75$} & {$\lambda=0.5$} & {$\lambda=0.25$}   
 \\
\hline
2 &	2 &	3.0215e-15 &	1.6369e-02 &	4.0902e-02 &	7.9510e-02 \\	
\hline
2 &	3 &	2.0905e-15 &	2.0886e-03 &	1.5199e-15 &	2.0736e-02 \\	
\hline
2 &	4 &	5.0931e-15 &	6.5040e-04 &	2.0079e-13 &	2.6627e-03 \\	
\hline
3 &	2 &	2.4612e-15 &	7.2797e-03 &	1.7946e-02 &	3.4360e-02 \\	
\hline
3 &	3 &	6.1801e-15 &	8.6612e-04 &	2.0028e-15 &	8.9241e-03 \\	
\hline
3 &	4 &	4.5786e-16 &	2.8497e-04 &	2.6717e-13 &	1.1576e-03 \\	
\hline
4 &	2 &	4.3896e-12 &	3.4508e-03 &	8.4394e-03 &	1.6034e-02 \\	
\hline
4 &	3 &	7.3017e-16 &	3.9086e-04 &	3.2147e-13 &	4.1118e-03 \\	
\hline
4 &	4 &	1.1516e-12 &	1.3231e-04 &	1.1456e-14 &	5.3613e-04 \\	
\hline
\end{tabular}
\caption{Error ($e_y$) in approximation for $\alpha=1$ with the proposed method for different $\lambda$ in Example \ref{ex4}.}
\label{table5}
\end{table*}
\end{ex}
The accuracy in the approximation of $y(t)$ for various values of $\lambda$ is summarized in Table~\ref{table5}. Among the tested values, the most precise approximation is observed when $\lambda=1$. Furthermore, the example illustrates that the proposed method is also suitable for solving non-linear differential equations, as evidenced by the satisfactory approximation errors obtained.

In addition, we assess the performance of the proposed method on a linear fractional differential equation. The exact solution in this case is given as $y(t) = (t-0.5)^{4.5}$ for $t \geq 0.5$ and $y(t)=0$ otherwise, which is a four-times continuously differentiable function but not infinitely differentiable at $t=0.5$. This provides a good test case to evaluate the method's accuracy for solutions with finite smoothness.

\begin{ex}\label{ex5}
In this example, consider the FDE\begin{align*}
   { }_0^CD_t^\alpha y(t) +y(t) &=f(t),~~~ t\in (0,1],\\
   y(0)&=0,
\end{align*}
where
\begin{align*}
 f(t)=\begin{cases}
       \frac{\Gamma(5.5)}{\Gamma(5.5-\alpha)} \left(t-\frac{1}{2}\right)^{4.5-\alpha}+\left(t-\frac{1}{2}\right)^{4.5},\quad &\text{if} \quad t\geq \frac{1}{2},\\
     0, & \text{otherwise}.  
   \end{cases} 
   \end{align*}
   \FloatBarrier
  \begin{table*}[!h]
\centering
\begin{tabular}{@{}|l|c|c|c|c|c|@{}}
\hline
$J$ & $M$ & {$\lambda=1$}
&{$\lambda=0.75$} & {$\lambda=0.5$} & {$\lambda=0.25$}   
 \\
\hline
2 &	4 &	2.0157e-03 &	9.1071e-03 &	5.3559e-02 &	3.3320e-01 \\	
\hline
2 &	5 &	4.6870e-05 &	1.0077e-03 &	2.3087e-02 &	3.7748e-01 \\	
\hline
2 &	6 &	7.3746e-06 &	8.0108e-15 &	6.5484e-03 &	3.5321e-01 \\	
\hline
\end{tabular}
\caption{Error ($e_y$) in approximation for $\alpha=0.75$ with the proposed method for different $\lambda$ in Example \ref{ex5}.}
\label{table6}
\end{table*}
The numerical errors ($e_y$) for this example are presented in Table \ref{table6}, corresponding to a fractional order of $\alpha=0.75$. The table documents the errors for a fixed $J=2$ while varying the parameter $M$ and the newly introduced parameter $\lambda$. Two primary observations can be made. First, for any fixed value of $\lambda$, the error $e_y$ decreases rapidly as $M$ increases, which demonstrates the convergence of the method. Second, and more significantly, the accuracy is highly dependent on the choice of $\lambda$. The results clearly indicate that the optimal performance is achieved when the parameter $\lambda$ is chosen to be equal to the fractional order of the derivative, $\alpha$. In the case where $\lambda = \alpha = 0.75$, the error drops to the level of machine precision for $M=6$, confirming the method's high accuracy under this specific parameter selection. 
% \FloatBarrier
\end{ex}
\begin{rmk}
    Based on a thorough analysis of the proposed method applied to several examples, we highlight the following key observations, which may also be generalized to other numerical methods constructed using M\"untz--Legendre polynomials:
\begin{enumerate}
        \item The commonly held assumption that the best approximation is achieved when the order of the Caputo derivative ($\alpha$) matches the M\"untz parameter ($\lambda$) is not universally valid. Our numerical experiments indicate that this relationship is problem-dependent and influenced by the specific numerical scheme employed. Consequently, the optimal choice of $\lambda$ should be determined in a systematic or algorithmic manner, for instance, using the approach proposed in \cite{singh2023algorithm}. This observation is also supported by the literature, where several authors report improved results even when $\alpha \neq \lambda$ \cite{rahimkhani2018muntz,pourbabaee2022new,maleknejad2021numerical}. In many studies, the fractional order itself is not fixed but varies, such as in variable-order or distributed-order fractional models.
        
        \item As demonstrated in Example~\ref{ex5}, the proposed method yields highly accurate results for a special class of functions, including those with limited smoothness (i.e., functions that are not infinitely differentiable), provided an appropriate M\"untz parameter ($\lambda$) is selected. This indicates that the method is well-suited for such problems and can achieve excellent accuracy.
        
        \item A major contribution of the proposed approach is that it resolves a notable gap in the existing literature by eliminating the intermediate integration errors associated with the M\"untz scaling-function basis. To the best of our knowledge, this specific issue has not been adequately addressed in previous works.
    \end{enumerate}
\end{rmk}

\section{Conclusion} \label{section6}
Due to the distinct advantages offered by fractional orthogonal polynomials over their classical counterparts in solving mathematical models involving fractional derivatives and integrals, it is essential to develop fractional quadrature rules to give exact results for fractional polynomials. Therefore, this paper explores the collocation method employing M\"untz–Legendre scaling functions alongside the fractional quadrature rule for solving fractional differential equations, demonstrating satisfactory approximation results. The study investigates various aspects of the fractional quadrature rule, including its properties, such as exactness and the associated error bounds. The operational matrices associated with M\"untz–Legendre scaling functions have been derived, and using them, the fractional differential equations were transformed into a set of algebraic equations. To illustrate the method's practicality, three test problems were examined. For future work, the intention is to adapt this methodology to address various optimal control problems.
%\section*{Declarations}
\subsection*{Data Availability Statement}
No data is available for this manuscript.
\subsection*{Conflict of Interest}
The author(s) declared no potential conflicts of interest with respect to the research, authorship, and/or publication of this article.
\bibliographystyle{elsarticle-num}
\bibliography{mybib}

\begin{thebibliography}{10}
\expandafter\ifx\csname url\endcsname\relax
  \def\url#1{\texttt{#1}}\fi
\expandafter\ifx\csname urlprefix\endcsname\relax\def\urlprefix{URL }\fi
\expandafter\ifx\csname href\endcsname\relax
  \def\href#1#2{#2} \def\path#1{#1}\fi

\bibitem{kumar2021collocation}
N.~Kumar, M.~Mehra, Collocation method for solving nonlinear fractional optimal control problems by using {H}ermite scaling function with error estimates, Optimal Control Applications and Methods 42~(2) (2021) 417--444.

\bibitem{MERAL2010939}
F.~Meral, T.~Royston, R.~Magin, Fractional calculus in viscoelasticity: An experimental study, Communications in Nonlinear Science and Numerical Simulation 15~(4) (2010) 939--945.

\bibitem{kulish2002application}
V.~V. Kulish, J.~L. Lage, Application of fractional calculus to fluid mechanics, J. Fluids Eng. 124~(3) (2002) 803--806.

\bibitem{assaleh2007modeling}
K.~Assaleh, W.~M. Ahmad, Modeling of speech signals using fractional calculus, in: 2007 9th International Symposium on Signal Processing and Its Applications, IEEE, 2007, pp. 1--4.

\bibitem{fellah2002application}
Z.~Fellah, C.~Depollier, M.~Fellah, Application of fractional calculus to the sound waves propagation in rigid porous materials: validation via ultrasonic measurements, Acta Acustica united with Acustica 88~(1) (2002) 34--39.

\bibitem{mathieu2003fractional}
B.~Mathieu, P.~Melchior, A.~Oustaloup, C.~Ceyral, Fractional differentiation for edge detection, Signal Processing 83~(11) (2003) 2421--2432.

\bibitem{sebaa2006application}
N.~Sebaa, Z.~E.~A. Fellah, W.~Lauriks, C.~Depollier, Application of fractional calculus to ultrasonic wave propagation in human cancellous bone, Signal Processing 86~(10) (2006) 2668--2677.

\bibitem{mehandiratta2019existence}
V.~Mehandiratta, M.~Mehra, G.~Leugering, Existence and uniqueness results for a nonlinear caputo fractional boundary value problem on a star graph, Journal of Mathematical Analysis and Applications 477~(2) (2019) 1243--1264.

\bibitem{canuto2007spectral1}
C.~Canuto, M.~Y. Hussaini, A.~Quarteroni, T.~A. Zang, Spectral methods: fundamentals in single domains, Springer Science \& Business Media, 2007.

\bibitem{canuto2007spectral2}
C.~Canuto, M.~Y. Hussaini, A.~Quarteroni, T.~A. Zang, Spectral methods: evolution to complex geometries and applications to fluid dynamics, Springer Science \& Business Media, 2007.

\bibitem{guo2006optimal}
B.-Y. Guo, J.~Shen, L.-L. Wang, Optimal spectral-galerkin methods using generalized jacobi polynomials, Journal of Scientific Computing 27 (2006) 305--322.

\bibitem{gottlieb1977numerical}
D.~Gottlieb, S.~A. Orszag, Numerical analysis of spectral methods: theory and applications, SIAM, 1977.

\bibitem{shen2011spectral}
J.~Shen, T.~Tang, L.-L. Wang, Spectral methods: algorithms, analysis and applications, Vol.~41, Springer Science \& Business Media, 2011.

\bibitem{pedas2012piecewise}
A.~Pedas, E.~Tamme, Piecewise polynomial collocation for linear boundary value problems of fractional differential equations, Journal of Computational and Applied Mathematics 236~(13) (2012) 3349--3359.

\bibitem{rahimkhani2018muntz}
P.~Rahimkhani, Y.~Ordokhani, E.~Babolian, M{\"u}ntz-legendre wavelet operational matrix of fractional-order integration and its applications for solving the fractional pantograph differential equations, Numerical algorithms 77 (2018) 1283--1305.

\bibitem{singh2023algorithm}
A.~K. Singh, M.~Mehra, An algorithm to estimate parameter in m{\"u}ntz-legendre polynomial approximation for the numerical solution of stochastic fractional integro-differential equation, Journal of Applied Mathematics and Computing (2023) 1--20.

\bibitem{kumar2023muntz}
N.~Kumar, M.~Mehra, M{\"u}ntz--legendre wavelet method for solving sturm--liouville fractional optimal control problem with error estimates, Mathematical Methods in the Applied Sciences (2023).

\bibitem{tripathi2013new}
M.~P. Tripathi, V.~K. Baranwal, R.~K. Pandey, O.~P. Singh, A new numerical algorithm to solve fractional differential equations based on operational matrix of generalized hat functions, Communications in Nonlinear Science and Numerical Simulation 18~(6) (2013) 1327--1340.

\bibitem{articleliterature1}
S.~Nemati, D.~F.~M. Torres, A new spectral method based on two classes of hat functions for solving systems of fractional differential equations and an application to respiratory syncytial virus infection, Soft Computing 25 (05 2021).

\bibitem{maleknejad2021numerical}
K.~Maleknejad, J.~Rashidinia, T.~Eftekhari, Numerical solutions of distributed order fractional differential equations in the time domain using the m{\"u}ntz--legendre wavelets approach, Numerical Methods for Partial Differential Equations 37~(1) (2021) 707--731.

\bibitem{bhrawy2015review}
A.~H. Bhrawy, T.~M. Taha, J.~A.~T. Machado, A review of operational matrices and spectral techniques for fractional calculus, Nonlinear Dynamics 81 (2015) 1023--1052.

\bibitem{li2015numerical}
C.~Li, F.~Zeng, Numerical methods for fractional calculus, Chapman and Hall/CRC, 2015.

\bibitem{akansu2010emerging}
A.~N. Akansu, W.~A. Serdijn, I.~W. Selesnick, Emerging applications of wavelets: A review, Physical communication 3~(1) (2010) 1--18.

\bibitem{mehra2018wavelets}
M.~Mehra, Wavelets Theory and Its Applications, Springer, 2018.

\bibitem{hosseinpour2018new}
S.~Hosseinpour, A.~Nazemi, E.~Tohidi, A new approach for solving a class of delay fractional partial differential equations, Mediterranean Journal of Mathematics 15 (2018) 1--20.

\bibitem{singh2024modified}
A.~K. Singh, M.~Mehra, A.~A. Alikhanov, Modified least squares method and a review of its applications in machine learning and fractional differential/integral equations (2024).
\newblock \href {http://arxiv.org/abs/2405.00382} {\path{arXiv:2405.00382}}.

\bibitem{wu2023gaussian}
M.~Wu, H.~Wang, Gaussian quadrature rules for composite highly oscillatory integrals, Mathematics of Computation (2023).

\bibitem{MR4050406}
L.~N. Trefethen, Approximation theory and approximation practice, extended Edition, Society for Industrial and Applied Mathematics (SIAM), Philadelphia, PA, [2020] \copyright 2020.

\bibitem{li2011numerical}
Y.~Li, N.~Sun, Numerical solution of fractional differential equations using the generalized block pulse operational matrix, Computers \& Mathematics with Applications 62~(3) (2011) 1046--1054.

\bibitem{babolian2008new}
E.~Babolian, Z.~Masouri, S.~Hatamzadeh, New direct method to solve nonlinear volterra-fredholm integral and integro-differential equations using operational matrix with block-pulse functions, Progress In Electromagnetics Research B 8 (2008) 59--76.

\bibitem{nemati2018numerical}
A.~Nemati, Numerical solution of 2d fractional optimal control problems by the spectral method along with bernstein operational matrix, International Journal of control 91~(12) (2018) 2632--2645.

\bibitem{pourbabaee2022new}
M.~Pourbabaee, A.~Saadatmandi, A new operational matrix based on m{\"u}ntz--legendre polynomials for solving distributed order fractional differential equations, Mathematics and Computers in Simulation 194 (2022) 210--235.

\end{thebibliography}

\end{document}